\newtheorem{theorem}{Theorem}[section]
\newtheorem{lemma}{Lemma}[section]
\newtheorem{prob}{Question}
\newtheorem{corollary}{Corollary}[section]
\newtheorem{OldTheorem}{Theorem}
\theoremstyle{definition}
\theoremstyle{definition}
\theoremstyle{remark}
\newtheorem{remark}{Remark}[section]
\numberwithin{equation}{section}
\def\t{\ensuremath{\textbf t}}
\def\ZM{\ensuremath{\mathfrak M}}
\def\ZA{\ensuremath{\mathcal A}}
\def\ZZ{\ensuremath{\mathbb Z}}
\def\ZN{\ensuremath{\mathbb N}}
\def\ZR{\ensuremath{\mathbb R}}
\def\E{\ensuremath{\textbf E}}
\def\md#1#2\emd{\ifx0#1
	\begin{equation*} #2 \end{equation*}\fi  %  single line display, no number
	\ifx1#1\begin{equation}#2\end{equation}\fi   % single line display, number
	\ifx2#1\begin{align*}#2\end{align*}\fi   % aligned display, no number
	\ifx3#1\begin{align}#2\end{align}\fi    % aligned display, number
	\ifx4#1\begin{gather*}#2\end{gather*}\fi  % multline, not align, no number
	\ifx5#1\begin{gather}#2\end{gather}\fi   % multinline, not align
	\ifx6#1\begin{multline*}#2\end{multline*}\fi  %  display too long for one line
	\ifx7#1\begin{multline}#2\end{multline}\fi  % as above, with numbers
	\ifx8#1\begin{multline*}\begin{split}#2\end{split}\end{multline*}\fi
	\ifx9#1\begin{multline}\begin{split}#2\end{split}\end{multline}\fi
}
\newcommand {\e }[1]{\eqref{#1}}
\newcommand {\lem }[1]{Lemma \ref{#1}}
\newcommand {\cor }[1]{Corollary \ref{#1}}
\newcommand {\trm }[1]{Theorem \ref{#1}}
\title[] {Sharp inequalities involving multiplicative chaos sums}
\author{Grigori A. Karagulyan}
\address{Faculty of Mathematics and Mechanics, Yerevan State
	University, Alex Manoogian, 1, 0025, Yerevan, Armenia} 
\email{g.karagulyan@ysu.am}
\address{Institute of Mathematics of NAS of RA, Marshal Baghramian ave., 24/5, Yerevan, 0019, Armenia} 
\email{g.karagulyan@gmail.com}
\thanks{The work was supported by the Science Committee of RA, in the frames of the research project 21AG‐1A045 }
\subjclass[2010]{42C05, 42C10, 42A55, 60G42, 60G48}
\keywords{multiplicative system, martingale difference, Khintchine's inequality, Azuma-Hoeffding inequality, lacunary subsystem, Rademacher chaos}
\begin{document}
\begin{abstract}
The present note is an essential addition to author's recent paper \cite{Kar4}, concerning general multiplicative systems of random variables. Using some lemmas and the methodology of \cite{Kar4}, we obtain a general extreme inequality, with corollaries involving Rademacher chaos sums and those analogues for multiplicative systems. In particular we prove that a system of functions generated by bounded products of a multiplicative system is a convergence system. 
\end{abstract}

	\maketitle  
%%%%%%%%%%%%%%%%%%%%%%%%%%%%%% SECTION  SECTION SECTION
%%%%%%%%%%%%%%%%%%%%%%%%%%%%%% SECTION  SECTION SECTION 
\section{Introduction}
	A sequence of bounded random variables  $\phi_n$, $n=1,2,\ldots$ (finite or infinite) is said to be multiplicative if the equality
	\begin{equation}\label{a13}
	{\textbf E}\left[\prod_{j\in A}\phi_{j}\right]=0
	\end{equation}
	holds for all nonempty subsets $A\subset \ZN$ of positive integers.  	The multiplicative systems were introduced by Alexits (see \cite {Alex}, chap 3.2), who also proved in \cite{Alex1} that the uniformly bounded multiplicative systems are convergence systems, i.e. the condition $\sum_k a_k^2<\infty$ implies almost sure convergence of series  $\sum_k a_k\phi_k$ (see \cite{KoRe, Kom, Gap2} for further extensions of this result).  The multiplicative systems were considered also in the context of central limit theorems and law of iterated logarithm (\cite{Mor, Ber, Fuk1,Fuk2}). Well-known examples of multiplicative sequences are mean zero independent random variables and more general, the martingale-differences. The trigonometric sequences $\{\cos2n_k\pi (x+x_k)\}$ on $(0,1)$ with positive integers $n_k$, satisfying $n_{k+1}\ge 2n_k$, are known to be non-martingale examples of multiplicative systems (see \cite{Zyg1}, chap. 5).  A wide class of multiplicative systems was recently introduced in \cite{Rub}. Namely, under certain symmetry conditions on $f\in L^\infty(\ZR/\ZZ)$, the functions $f(nx)$, $n=1,2,\ldots$ form a multiplicative system on $(0,1)$.

		Recall the definition of multiplicative error given in \cite{Kar4}. Denote $\ZZ_n=\{1,2,\ldots,n\}$ and let $\ZZ_\infty=\{1,2,\ldots\}$ be the set of all positive integers. We consider systems of random variables $\phi=\{\phi_k:\, k\in \ZZ_n\}$ (here $n$ can be either finite or infinite) satisfying
	\begin{equation}\label{a28}
		A_k\le \phi_k\le B_k,  \text{ where }A_k<0<B_k.
	\end{equation}
Given $d\le n\le \infty$ the $d$-multiplicative error of the system $\phi$ is the quantity 
	\begin{equation}
	\mu_d(\phi)=\sum_{A\subset  \ZZ_n,\, \#(A)\le d}\quad\prod_{j\in A}(C_j)^{-1}\left|{\textbf E}\left[\prod_{j\in A}\phi_{j} \right]\right|,
	\end{equation}
where $C_k=\min\{-A_k,B_k\}$.  We say $\phi$ is $d$-multiplicative if $\mu_d(\phi)=0$ or equivalently \e{a13} holds for any finite nonempty $A\subset \ZN$ of cardinality $\#(A)\le d$. If $d=\infty$ this gives the classical definition of multiplicative systems. It was proved in \cite{Kar4}  (see \lem{L0} below)  that if the system $\phi$ is defined on $[0,1]$ and $\mu=\mu_d(\phi)<\infty$, then the functions of $\phi$ can be extended up to $[0,1+\mu]$ to a $d$-multiplicative system, obeying same condition \e{a28}. We call a system of random variables $d$-independent if any collection of its $d$ elements is independent. It is clear that $d$-independent mean zero random variables are $d$-multiplicative. 

The main result of \cite{Kar4} reads as follows.
	\begin{OldTheorem}[\cite{Kar4}]
		Let $\Phi:{\mathbb R}\to {\mathbb R}^+$ be a convex function and $\phi=\{\phi_k:\, k=1,2,\ldots, n\}$ be a system  of random variables satisfying \eqref{a28}.  Then for any integer $d\le n$ and a choice of real coefficients $a_1,\ldots, a_n$ it holds the inequality
		\begin{equation}\label{x12}
			\textbf{E}\left[\Phi\left(\sum_{k=1}^na_k\phi_k\right)\right]\le 	(1+\mu_d(\phi))\textbf{E}\left[\Phi\left(\sum_{k=1}^na_k\xi_k\right)\right],
		\end{equation}
		where $\xi_k$, $k=1,2,\ldots,n$, are $\{A_k,B_k\}$- valued $d$-independent mean zero random variables. 
	\end{OldTheorem}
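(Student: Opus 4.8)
The plan is to split the proof into two independent pieces: a reduction, via the extension lemma \lem{L0}, to the case of a genuinely $d$-multiplicative system, which alone produces the factor $1+\mu_d(\phi)$; and a clean comparison argument for $d$-multiplicative systems based on randomized rounding to the endpoints $\{A_k,B_k\}$ combined with Jensen's inequality. The point of this splitting is that the entire ``error'' $\mu_d(\phi)$ is absorbed in the first step, so the comparison step only has to be carried out with constant $1$.

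First I would dispose of the multiplicative error. Assume $\phi$ is defined on $[0,1]$ and put $\mu=\mu_d(\phi)$. By \lem{L0} the functions $\phi_k$ extend to a $d$-multiplicative system $\tilde\phi=\{\tilde\phi_k\}$ on $[0,1+\mu]$ obeying the same bounds \eqref{a28}. Since $\Phi\ge 0$ and $\tilde\phi=\phi$ on $[0,1]$,
\begin{equation*}
\textbf{E}\left[\Phi\Big(\sum_k a_k\phi_k\Big)\right]=\int_0^1\Phi\Big(\sum_k a_k\phi_k\Big)\,dx\le \int_0^{1+\mu}\Phi\Big(\sum_k a_k\tilde\phi_k\Big)\,dx=(1+\mu)\,\textbf{E}\left[\Phi\Big(\sum_k a_k\tilde\phi_k\Big)\right],
\end{equation*}
the last expectation being taken with respect to normalized Lebesgue measure on $[0,1+\mu]$. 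Thus it suffices to prove the inequality with constant $1$ for the $d$-multiplicative system $\tilde\phi$.

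Next comes the comparison. Enlarging the probability space, I would couple to $\tilde\phi$ a two-valued system $\xi=\{\xi_k\}$ by randomized rounding: conditionally on $\tilde\phi$, let the $\xi_k$ be independent with $\textbf{E}[\xi_k\mid\tilde\phi]=\tilde\phi_k$, that is $\xi_k=B_k$ with conditional probability $(\tilde\phi_k-A_k)/(B_k-A_k)$ and $\xi_k=A_k$ otherwise; this is admissible precisely because $A_k\le\tilde\phi_k\le B_k$. Each $\xi_k$ is $\{A_k,B_k\}$-valued and $\textbf{E}[\xi_k]=\textbf{E}[\tilde\phi_k]=0$. Conditionally on $\tilde\phi$ the distribution of $\sum_k a_k\xi_k$ has mean $\sum_k a_k\tilde\phi_k$, so Jensen's inequality gives $\textbf{E}[\Phi(\sum_k a_k\xi_k)\mid\tilde\phi]\ge\Phi(\sum_k a_k\tilde\phi_k)$, and after taking expectations
\begin{equation*}
\textbf{E}\left[\Phi\Big(\sum_k a_k\tilde\phi_k\Big)\right]\le \textbf{E}\left[\Phi\Big(\sum_k a_k\xi_k\Big)\right].
\end{equation*}
Combining this with the reduction yields the asserted inequality with the factor $1+\mu_d(\phi)$.

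It remains to verify that $\xi$ is $d$-independent, and here I expect the only real subtlety. By conditional independence, for every $T$ with $1\le\#(T)\le d$,
\begin{equation*}
\textbf{E}\Big[\prod_{j\in T}\xi_j\Big]=\textbf{E}\Big[\prod_{j\in T}\textbf{E}[\xi_j\mid\tilde\phi]\Big]=\textbf{E}\Big[\prod_{j\in T}\tilde\phi_j\Big]=0,
\end{equation*}
the last equality being the $d$-multiplicativity of $\tilde\phi$. The delicate point is to upgrade this vanishing of all joint moments of order at most $d$ to genuine $d$-independence. For $\{A_k,B_k\}$-valued mean-zero variables the $2^{\#(T)}$ joint moments $\{\textbf{E}[\prod_{j\in U}\xi_j]:U\subseteq T\}$ determine the joint law of $(\xi_j)_{j\in T}$ (the moment map is a bijection, being a Walsh-type transform), and for independent mean-zero two-valued variables with the same marginals these moments all vanish on nonempty $U$; since our moments agree, the laws coincide, so any $d$ of the $\xi_k$ are independent. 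This identification, which is exactly where the hypothesis of $d$-multiplicativity is consumed, is the crux, whereas the rounding-plus-Jensen step and the \lem{L0} reduction are routine.
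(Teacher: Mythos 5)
Your proof is correct, and while it follows the same overall skeleton as the paper's argument (in Section \ref{S3}, for the more general \trm{T1}), it implements the crucial middle step by a genuinely different and cleaner device. Both proofs first invoke \lem{L0} to extend $\phi$ to a $d$-multiplicative system on $[0,1+\mu)$, so that the factor $1+\mu_d(\phi)$ comes solely from the change of normalization, and both ultimately rest on the observation that a two-valued system whose joint moments up to order $d$ vanish is $d$-independent (\lem{L3}, which your Walsh-transform identification of the joint law reproves). Where you differ is in how each $\phi_k$ gets replaced by an $\{A_k,B_k\}$-valued variable: the paper first approximates $\phi_k$ by step functions while preserving exact multiplicativity (\lem{L1}), then performs a deterministic two-point dilation on each constancy interval with Jensen's inequality (\lem{L2}), and finally needs a compactness/limiting argument in $\delta=1/m$ to remove the approximation error; your randomized rounding --- conditionally independent $\xi_k$ with $\textbf{E}[\xi_k\mid\tilde\phi]=\tilde\phi_k$ on an enlarged probability space, followed by conditional Jensen --- accomplishes the same mean-preserving spread in one stroke, with the moment identity $\textbf{E}[\prod_{j\in T}\xi_j]=\textbf{E}[\prod_{j\in T}\tilde\phi_j]$ falling out of conditional independence rather than being engineered interval by interval. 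What the paper's route buys is that it works entirely within $[0,1)$ with explicit step functions (convenient for the constructive flavor of \lem{L0}--\lem{L2}); what yours buys is the elimination of the step-function approximation and the limiting argument altogether, and it extends verbatim to the multivariate convex $G$ of \trm{T1}. One small point worth making explicit: your construction yields \emph{one particular} $d$-independent $\{A_k,B_k\}$-valued system $\xi$, not an inequality against every such system; this is the intended reading of the theorem (see the paper's remark that for $d<n$ such systems are not unique), and the paper's own proof has exactly the same scope.
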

Applying this inequality for $d$-multiplicative systems, i.e. $\mu_d(\phi)=0$, with the parameters $A_k=-1$ and $B_k=1$, we immediately obtain the following extremal property of Rademacher random variables.
	\begin{OldTheorem}[\cite{Kar4}]
		Let $\Phi:{\mathbb R}\to {\mathbb R}^+$ be a convex function. If $\phi=\{\phi_k:\, k=1,2,\ldots, n\}$, is a multiplicative system of random variables, satisfying $\|\phi_k\|_\infty\le 1$, then for any choice of coefficients $a_1,\ldots, a_n$ we have
		\begin{equation}\label{a50}
			\textbf{E}\left[\Phi\left(\sum_{k=1}^na_k\phi_k\right)\right]\le {\textbf E}\left[\Phi\left(\sum_{k=1}^na_kr_k\right)\right],
		\end{equation}
		where $r_k$, $k=1,2,\ldots,n$, are Rademacher independent random variables.
	\end{OldTheorem}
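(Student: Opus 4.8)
The plan is to read off \eqref{a50} as the special case $d=n$, $A_k=-1$, $B_k=1$ of the general inequality \eqref{x12}, so the whole argument reduces to checking that the hypotheses line up and that the auxiliary variables $\xi_k$ produced by \eqref{x12} are precisely the independent Rademacher system.

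First I would verify applicability. Since $\|\phi_k\|_\infty\le 1$ we have $-1\le\phi_k\le 1$ almost surely, so the symmetric choice $A_k=-1<0<1=B_k$ meets the two-sided bound \eqref{a28}; with these values $C_k=\min\{-A_k,B_k\}=1$. Because $\phi$ is multiplicative in the classical sense, the cancellation \eqref{a13} holds for every finite nonempty $A$, and in particular for every $A$ with $\#(A)\le n$, so $\mu_n(\phi)=0$. Thus the system is $n$-multiplicative and the hypotheses of \eqref{x12} hold with $d=n$.

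Next I would invoke \eqref{x12} with this $d=n$ and these bounds. The prefactor on the right becomes $1+\mu_n(\phi)=1$, yielding
\begin{equation*}
\textbf{E}\left[\Phi\left(\sum_{k=1}^n a_k\phi_k\right)\right]\le \textbf{E}\left[\Phi\left(\sum_{k=1}^n a_k\xi_k\right)\right],
\end{equation*}
where the $\xi_k$ are $\{-1,1\}$-valued, $n$-independent, mean-zero random variables. It then remains only to identify these $\xi_k$ with Rademacher variables: a $\{-1,1\}$-valued variable with $\textbf{E}[\xi_k]=0$ must satisfy $P(\xi_k=1)=P(\xi_k=-1)=\tfrac12$, and $n$-independence of a family of exactly $n$ variables is the same as full mutual independence. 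Hence $\{\xi_k\}$ has the same distribution as the independent Rademacher system $\{r_k\}$, which turns the displayed bound into \eqref{a50}.

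I do not anticipate a genuine obstacle: the statement is an immediate specialization of \eqref{x12}, and the only points requiring care are cosmetic—confirming that the strict sign conditions $A_k<0<B_k$ in \eqref{a28} still permit the choice $A_k=-1$, $B_k=1$, and observing that among exactly $n$ variables the notion of $n$-independence collapses to ordinary joint independence, so no structure beyond what multiplicativity already supplies is needed.
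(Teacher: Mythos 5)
Your proposal is correct and matches the paper's own derivation exactly: the paper obtains \eqref{a50} by specializing \eqref{x12} to $d=n$, $A_k=-1$, $B_k=1$, using $\mu_n(\phi)=0$ for a multiplicative system, and notes (as you do) that for $d=n$ the $\{-1,1\}$-valued mean-zero $d$-independent family is uniquely the independent Rademacher system. No gaps.
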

Applying inequalities \e{x12} and \e{a50}, paper \cite{Kar4} provides extensions of some probability inequalities for general multiplicative systems. In particular it was proved the following generalization of Azuma-Hoeffding's martingale inequality \cite{Azu, Hoe}. 
\begin{OldTheorem}[\cite{Kar4}]
	If a system  of random variables $\phi=\{\phi_k:\, k=1,2,\ldots, n\}$ satisfies \eqref{a28}, then it holds the inequality
	\begin{equation}\label{a21}	
		\left|\left\{\sum_{k=1}^n\phi_k>\lambda\right\}\right|\le (1+\mu_n(\phi)\exp\left(-\frac{2\lambda^2}{\sum_{k=1}^n(B_k-A_k)^2}\right),\quad \lambda>0.
	\end{equation}
\end{OldTheorem}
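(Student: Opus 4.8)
The plan is to run the classical Cramér--Chernoff argument that underlies the Azuma--Hoeffding inequality, using the comparison estimate \e{x12} in place of the independence hypothesis of the classical proof. Concretely, I would apply \e{x12} to the convex function $\Phi(x)=e^{tx}$ (for a free parameter $t>0$), with all coefficients $a_k=1$ and with $d=n$; since there are only $n$ variables in the system, the choice $d=n$ makes the surrogate variables $\xi_k$ fully independent, $\{A_k,B_k\}$-valued and mean zero, which is exactly what is needed to factorize a moment generating function.

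The steps, in order, are as follows. First, exponential Chebyshev gives, for every $t>0$,
\[
\left|\left\{\sum_{k=1}^n\phi_k>\lambda\right\}\right|\le e^{-t\lambda}\,\E\left[\exp\left(t\sum_{k=1}^n\phi_k\right)\right].
\]
Second, \e{x12} with $\Phi(x)=e^{tx}$, $a_k=1$ and $d=n$ bounds the moment generating function of $\phi$ by $(1+\mu_n(\phi))$ times that of the independent system $\xi=\{\xi_k\}$. Third, independence factorizes the latter, $\E[\exp(t\sum_{k}\xi_k)]=\prod_{k}\E[e^{t\xi_k}]$, and Hoeffding's lemma for a single bounded mean-zero variable, $\E[e^{t\xi_k}]\le\exp(t^2(B_k-A_k)^2/8)$, yields the product bound $\exp\bigl(\tfrac{t^2}{8}\sum_{k}(B_k-A_k)^2\bigr)$. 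Collecting these estimates,
\[
\left|\left\{\sum_{k=1}^n\phi_k>\lambda\right\}\right|\le (1+\mu_n(\phi))\exp\left(-t\lambda+\frac{t^2}{8}\sum_{k=1}^n(B_k-A_k)^2\right).
\]
Finally, writing $S=\sum_{k=1}^n(B_k-A_k)^2$ and minimizing the exponent $-t\lambda+t^2 S/8$ over $t>0$ (the minimizer being $t=4\lambda/S$) turns it into $-2\lambda^2/S$, which is precisely \e{a21}.

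Since \e{x12} is already available, the argument is essentially mechanical and I do not anticipate a serious obstacle. The two points that deserve care are the choice $d=n$, which is what licenses the factorization of the moment generating function of the $\xi_k$, and invoking the correct form of Hoeffding's lemma (with the sharp constant $1/8$) for each surrogate variable. One should also record that the measure $|\{\cdot\}|$ on the underlying space coincides with the expectation $\E$ appearing in \e{x12}, so that the Chebyshev step and the comparison step refer to the same integral.
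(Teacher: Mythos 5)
Your proposal is correct and follows the same route as the source: the paper derives \e{a21} in \cite{Kar4} precisely by applying the comparison inequality \e{x12} with $\Phi(x)=e^{tx}$, $a_k=1$ and $d=n$ to reduce to independent $\{A_k,B_k\}$-valued mean-zero variables, and then running the standard Chernoff--Hoeffding computation with the optimization $t=4\lambda/S$. The only discrepancy is a typographical one in the statement itself (a missing closing parenthesis after $\mu_n(\phi)$), not in your argument.
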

The present note is an addition to paper \cite{Kar4}. Essentially using the lemmas and the methodology of \cite{Kar4} it provides an extension of inequality \e{x12} in more general settings, with corollary inequalities involving Rademacher chaos and its analogue for multiplicative systems. We say a function $G:{\mathbb R}^n\to {\mathbb R}^+$ is convex, if it is convex with respect to each variable separately. Hence the main result of the present note is the following.
	\begin{theorem}\label{T1}
		Let a system  of random variables $\phi=\{\phi_k:\, k=1,2,\ldots, n\}$ satisfy \eqref{a28}, $d\le n$ and $G:{\mathbb R}^n\to {\mathbb R}^+$ be a convex function.  Then it holds the inequality
		\begin{equation}\label{a12}
		\textbf{E}\left[G\left(\phi_1,\ldots,\phi_n\right)\right]\le (1+\mu_d(\phi))\textbf{E}\left[G\left(\xi_1,\ldots,\xi_n\right)\right],
		\end{equation}
		where $\xi_k$, $k=1,2,\ldots,n$, are $\{A_k,B_k\}$- valued $d$-independent mean zero random variables.
	\end{theorem}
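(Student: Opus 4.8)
The plan is to combine three ingredients: the extension lemma \lem{L0} to absorb the multiplicative error into the factor $1+\mu_d(\phi)$, a pointwise domination of a separately convex $G$ by its multi-affine interpolant, and a moment-matching choice of the comparison system. First I would reduce to the case $\mu_d(\phi)=0$. Realizing $\phi$ on $[0,1]$ and applying \lem{L0}, I extend it to a $d$-multiplicative system $\tilde\phi$ on $[0,1+\mu]$, $\mu=\mu_d(\phi)$, obeying the same bounds \eqref{a28}. Since $G\ge 0$ and $\tilde\phi=\phi$ on $[0,1]$,
\[ \textbf{E}\left[G(\phi)\right]=\int_0^1 G(\phi)\,dx\le\int_0^{1+\mu}G(\tilde\phi)\,dx=(1+\mu)\,\textbf{E}_\nu\left[G(\tilde\phi)\right], \]
where $\textbf{E}_\nu$ denotes expectation for the normalized measure $(1+\mu)^{-1}dx$ on $[0,1+\mu]$. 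Hence it suffices to prove \eqref{a12} with constant $1$ for a $d$-multiplicative system, and I write $\phi$ for $\tilde\phi$ from now on.

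The key new step, which replaces the role of convexity of $\Phi$ in \eqref{x12}, is the pointwise bound $G(x)\le L(x)$ for every $x$ in the box $\prod_{k=1}^n[A_k,B_k]$, where $L$ is the unique multi-affine function agreeing with $G$ at the $2^n$ vertices of the box. I would prove this by induction on $n$. For $n=1$ it is the statement that a convex function lies below its chord. For the step I fix $x_n$ and use convexity in the last variable,
\[ G(x)\le\frac{B_n-x_n}{B_n-A_n}G(x',A_n)+\frac{x_n-A_n}{B_n-A_n}G(x',B_n),\qquad x'=(x_1,\dots,x_{n-1}), \]
then bound each of $G(x',A_n)$ and $G(x',B_n)$ by its multi-affine interpolant in $x'$ via the inductive hypothesis; comparing these affine-in-$x_n$ expressions at $x_n=A_n,B_n$ identifies the right-hand side with $L(x)$.

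Writing $L(x)=\sum_{S\subseteq\{1,\dots,n\}}c_S\prod_{k\in S}x_k$ and using that $(\xi_1,\dots,\xi_n)$ is supported on the vertices (each $\xi_k\in\{A_k,B_k\}$), I get $G(\xi)=L(\xi)$ identically, so that
\[ \textbf{E}\left[G(\phi)\right]\le\textbf{E}\left[L(\phi)\right]=\sum_S c_S\,\textbf{E}\left[\prod_{k\in S}\phi_k\right],\qquad \textbf{E}\left[G(\xi)\right]=\sum_S c_S\,\textbf{E}\left[\prod_{k\in S}\xi_k\right]. \]
For $1\le\#(S)\le d$ both product expectations vanish, by $d$-multiplicativity of $\phi$ and by mean zero and $d$-independence of $\xi$. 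To match the terms with $\#(S)>d$ I would take $\xi$ to be the vertex-valued system whose product expectations coincide with those of $\phi$: the map $x\mapsto(\prod_{k\in S}x_k)_S$ is affine in each coordinate, hence carries the box into the convex hull of its vertex images, so the moment vector of $\phi$ is realized by some distribution on the vertices. This $\xi$ has mean zero and vanishing product expectations of all orders $\le d$, so it is $d$-independent, and with this choice $\textbf{E}[L(\phi)]=\textbf{E}[L(\xi)]$, yielding $\textbf{E}[G(\phi)]\le\textbf{E}[G(\xi)]$ and, after the first reduction, \eqref{a12}.

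The main obstacle is the behaviour at orders $>d$: full independence of $\xi$ is not sufficient when $d<n$, because the high-order product expectations of a merely $d$-multiplicative $\phi$ need not vanish (already for $n=2$, $d=1$ one sees the inequality fail for independent $\xi$). The decisive points are therefore the inductive interpolation bound and the realization of the moment vector of $\phi$ by a vertex-valued, necessarily $d$-independent, system $\xi$; I expect these two steps to carry the argument.
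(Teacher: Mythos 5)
Your proposal is correct, and its core is genuinely different from the paper's. You share the outer reduction (extend via \lem{L0} to a $d$-multiplicative system on $[0,1+\mu)$ and renormalize to absorb the factor $1+\mu_d(\phi)$) and the principle of \lem{L3} (vertex-valued plus vanishing mixed moments up to order $d$ forces $d$-independence), but the heart of your argument — the pointwise domination $G\le L$ by the multi-affine interpolant on the box $\prod_k[A_k,B_k]$, the expansion $L(x)=\sum_S c_S\prod_{k\in S}x_k$, and the realization of the moment vector $\bigl(\E[\prod_{k\in S}\phi_k]\bigr)_S$ by a distribution on the vertices via the convex-hull property of multi-affine maps — replaces the paper's entire chain of \lem{L1} (approximation by step functions preserving the multiplicative relations), \lem{L2} (coordinate-by-coordinate redistribution of mass onto the endpoints $\{A_k,B_k\}$ using Jensen on each constancy interval), and the final compactness/limit argument in $\delta=1/m$. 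The two are conceptually cousins — the paper's \lem{L2} is an iterated, constructive version of your chord bound, and its moment preservation \eqref{a8} plays the role of your moment matching — but your packaging is cleaner: it avoids step-function approximation and the limiting procedure altogether, and it makes explicit why the comparison system $\xi$ must be chosen to match the higher-order ($>d$) product moments of $\phi$ when $d<n$, a point the paper leaves implicit in the construction. Each step you flag as decisive (the inductive interpolation bound and the Carath\'eodory-type realization on vertices) does indeed go through as you describe.
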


	\begin{corollary}\label{C1}
		Let $G:{\mathbb R}^n\to {\mathbb R}^+$ be a convex function. If $\phi=\{\phi_k:\, k=1,2,\ldots, n\}$, is a system of random variables, satisfying $\|\phi_k\|_\infty\le 1$, (i.e. $B_k=-A_k=1$ in \e{a28}), then we have
		\begin{equation}\label{a33}
		\textbf{E}\left[G\left(\phi_1,\ldots,\phi_n\right)\right]\le (1+\mu_d(\phi))\textbf{E}\left[G\left(r_1,\ldots,r_n\right)\right],
		\end{equation}
		where $r_k$, $k=1,2,\ldots,n$, are Rademacher $d$-independent random variables.
	\end{corollary}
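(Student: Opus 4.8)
Since \cor{C1} is exactly \trm{T1} specialized to $A_k=-1$, $B_k=1$ — a $\{-1,1\}$-valued mean zero variable is a Rademacher variable, and the $d$-independence is inherited — it suffices to describe how I would prove \trm{T1}; the corollary follows by this substitution. The plan is to split the argument into a reduction step that removes the multiplicative error and a construction step that produces the comparison system $\xi$.

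First I would reduce to the $d$-multiplicative case $\mu_d(\phi)=0$ by invoking the extension \lem{L0}. Working on the probability space $[0,1]$, extend the functions of $\phi$ to a $d$-multiplicative system $\tilde\phi$ on $[0,1+\mu]$, $\mu=\mu_d(\phi)$, still obeying \e{a28} with the same $A_k,B_k$. Because $G\ge 0$ and $\tilde\phi$ restricts to $\phi$ on $[0,1]$, nonnegativity gives $\int_0^1 G(\phi)\,dx\le\int_0^{1+\mu}G(\tilde\phi)\,dx=(1+\mu)\,\textbf{E}[G(\tilde\phi)]$, the last expectation being taken against the normalized measure on $[0,1+\mu]$. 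This is precisely where the factor $1+\mu_d(\phi)$ in \e{a12} originates, so, after renaming, it remains to prove the clean inequality $\textbf{E}[G(\phi)]\le\textbf{E}[G(\xi)]$ under the assumption that $\phi$ itself is $d$-multiplicative.

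The heart of the matter is the construction of $\xi$. Given a $d$-multiplicative system $\phi$, I would define $\xi$ by conditional vertex randomization: conditionally on $\phi$, let $\xi_1,\dots,\xi_n$ be independent, with $\xi_k$ taking the value $B_k$ with probability $(\phi_k-A_k)/(B_k-A_k)$ and the value $A_k$ with probability $(B_k-\phi_k)/(B_k-A_k)$, so that $\textbf{E}[\xi_k\mid\phi]=\phi_k$. Each $\xi_k$ is then $\{A_k,B_k\}$-valued by construction. Integrating out the coordinates of $\xi$ one at a time and using that $G$ is convex in each variable separately, conditional Jensen yields $\textbf{E}[G(\xi)\mid\phi]\ge G(\textbf{E}[\xi_1\mid\phi],\dots,\textbf{E}[\xi_n\mid\phi])=G(\phi)$, and taking expectations gives $\textbf{E}[G(\phi)]\le\textbf{E}[G(\xi)]$. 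The conditional independence of the $\xi_k$ is exactly what makes this coordinatewise Jensen step legitimate.

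It remains to check that $\xi$ is mean zero and $d$-independent, and this is the step I expect to be the real obstacle. By the tower property and conditional independence, $\textbf{E}\big[\prod_{j\in S}\xi_j\big]=\textbf{E}\big[\prod_{j\in S}\phi_j\big]$ for every $S$; for $1\le\#(S)\le d$ this vanishes because $\phi$ is $d$-multiplicative, giving in particular $\textbf{E}[\xi_k]=0$. To upgrade these vanishing mixed moments into genuine $d$-independence I would use that for binary random variables the monomials $\prod_{j\in T}\xi_j$, $T\subset S$, form a basis for the functions on $\prod_{j\in S}\{A_j,B_j\}$, so the joint law of any $d$-subset is determined by the moments $\textbf{E}[\prod_{j\in T}\xi_j]$ with $\#(T)\le d$; since these all agree with the independent product law (each equals $0$ for nonempty $T$), any $d$ of the $\xi_k$ are independent. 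Combining this construction with the reduction step proves \e{a12}, and setting $A_k=-1$, $B_k=1$ yields \e{a33} of \cor{C1}.
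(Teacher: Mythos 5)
Your proposal is correct, and the derivation of \cor{C1} from \trm{T1} (set $A_k=-1$, $B_k=1$, note that a $\{-1,1\}$-valued mean zero variable is a Rademacher variable) is exactly how the paper obtains it. Where you genuinely diverge is in the proof of \trm{T1} itself. The reduction step is the same as the paper's: both you and the author invoke \lem{L0} to extend $\phi$ to a $d$-multiplicative system on $[0,1+\mu)$, which is the sole source of the factor $1+\mu_d(\phi)$. But for the core comparison the paper goes through step-function approximation (\lem{L1}), then a deterministic mass-transport construction on constancy intervals (\lem{L2}) that replaces each $f_k$ by an $\{A_k,B_k\}$-valued step function with the same mixed moments while increasing $\int G$ via Jessen's inequality, then \lem{L3} to recognize the result as $d$-independent, and finally a compactness/limiting argument in $\delta=1/m$ to remove the approximation error. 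Your conditional vertex randomization --- sampling $\xi_k\in\{A_k,B_k\}$ conditionally independently given $\phi$ with $\textbf{E}[\xi_k\mid\phi]=\phi_k$, then applying coordinatewise conditional Jensen --- accomplishes the same comparison in one stroke, with no approximation and no limit passage; your moment computation $\textbf{E}[\prod_{j\in S}\xi_j]=\textbf{E}[\prod_{j\in S}\phi_j]$ plus the basis argument for two-valued variables is precisely the content of \lem{L3}, reproved. The trade-off is that your route requires enlarging the probability space to carry the auxiliary randomization (harmless here, since only the law of $\xi$ enters the right-hand side of \e{a12}), whereas the paper stays with explicit step functions on $[0,1)$; in exchange you avoid the measurability bookkeeping of \e{a7}, the continuity-of-$G$ argument, and the subsequence extraction at the end of the paper's proof. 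Both arguments use separate convexity in exactly the same way (one coordinate at a time), so the hypotheses needed are identical.
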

\begin{remark}
	We remark that if $l<n$, then the sequences of $\{A_k,B_k\}$- valued mean zero $d$-independent random variables can be probabilistically different, whereas when $d=n$ such a sequence is uniquely determined. So if $d=n$ in \cor{C1},  then $r_k$, $k=1,2,\ldots,n$, become ordinary Rademacher independent random variables. In fact, the case $d=n$ is the most interesting case in \trm{T1} and \cor{C1}.
\end{remark}
\begin{remark}
	We will mainly be interested in convex functions $G$ obtained as follows. Let $X$ be a Banach space and $\ZA\subset 2^{\ZZ_n}$ be a family of nonempty subsets of $\ZZ_n=\{1,2,\ldots,n\}$. To each $A\in\ZA$ we assign a coefficient $x_A\in F$ and consider the function
\begin{equation}
	P(\t)=\sum_{A\in \ZA}x_A\prod_{k\in A}t_k:\ZR^n\to X,
\end{equation}
with respect to variable $\t=(t_1,\ldots,t_n)\in \ZR^n$. We will call such functions quasi-polynomials. Given sequence of quasi polynomials $P_j$, $j=1,2,\ldots,m$, and a one variable convex function $\Phi:\ZR^+\to \ZR^+$. It is an easy task to check that
\begin{equation}
	G(t_1,\ldots,t_n)=\Phi\left(\max_{1\le j\le m}\|P_j(t_1,\ldots,t_n)\|_X\right)
\end{equation}
is a convex function on $\ZR^n$.
\end{remark}
Let $\phi_n$, $n=1,2,\ldots$, be a multiplicative system. For a given nonempty $A\subset \ZN$ denote
\begin{equation}\label{x13}
	\phi_A(x)=\prod_{k\in A}\phi_k(x).
\end{equation}
If $\{\phi_n\}$ coincides with the Rademacher system, $\phi_n=r_n$, then $\{\phi_A\}$ becomes the classical Walsh system that will be denoted by $w_A(x)$. The subsystem $\{w_A:\, A\subset \ZN,\, \#(A)=d\}$ of Walsh functions, where only $d$-products of Rademacher functions are involved, is called Rademacher chaos of order $d$. The Rademacher chaos of first order ($d=1$) coincides with the Rademacher functions. The Rademacher chaos systems are very well investigated in different points of view. It is of interest the comparisons of the different $L^p$-norms of finite sums
\begin{equation}\label{d1}
	S(x)=\sum_{A\subset \ZN,\, \#(A)=d}b_Aw_A(x),
\end{equation}
 where $b_A$ is finitely supported sequence of real numbers. There are many other interesting properties of Rademacher chaos systems that doesn't have the complete system of Walsh functions (see \cite{Ast}, chap. 6). The following generalization of Khintchine \cite{Khi} inequality for the Rademacher chaos proved independently by Bonami \cite{Bon} and Kiener \cite{Kie} is a crucial tool for the analysis of Boolean functions (see \cite{Don}).
\begin{OldTheorem}[Bonami-Kiener]
	For any integers $d\ge 2$ and a Rademacher chaos sum \e{d1} it holds the bound
	\begin{equation}\label{d2}
		\left\|S\right\|_p\le (p-1)^{d/2}\left\|S\right\|_2,\quad p>2.
	\end{equation}
\end{OldTheorem}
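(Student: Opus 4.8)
The plan is to reduce the claim to the Bonami--Beckner hypercontractive estimate for the noise operator on the discrete cube. Recall the operator $T_\rho$, $0<\rho<1$, acting diagonally on the Walsh system by $T_\rho w_A=\rho^{\#(A)}w_A$. For a chaos sum $S=\sum_{\#(A)=d}b_Aw_A$ of pure degree $d$ one has $T_\rho S=\rho^{d}S$, hence $S=\rho^{-d}T_\rho S$ and $\|S\|_p=\rho^{-d}\|T_\rho S\|_p$. Thus if I can establish the single hypercontractive bound
\[
\|T_\rho f\|_p\le\|f\|_2,\qquad \rho=\frac{1}{\sqrt{p-1}},
\]
valid for every $f$ on the cube (not only degree-$d$ ones), then applying it to $f=S$ gives $\|S\|_p\le\rho^{-d}\|S\|_2=(p-1)^{d/2}\|S\|_2$, which is exactly \e{d2}. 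So the whole problem collapses to proving this $(2\to p)$ hypercontractive inequality with the sharp parameter $\rho=(p-1)^{-1/2}$.

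Second, I would establish the base case $n=1$, i.e. the two-point inequality on $\{-1,1\}$. Writing $f(x)=a+bx$, one has $T_\rho f=a+\rho bx$ and $\|f\|_2=(a^2+b^2)^{1/2}$, so the required estimate is
\[
\Big(\tfrac12|a+\rho b|^p+\tfrac12|a-\rho b|^p\Big)^{1/p}\le (a^2+b^2)^{1/2},\qquad \rho=\tfrac{1}{\sqrt{p-1}}.
\]
By homogeneity one may normalise and reduce this to a one-parameter inequality, which is then verified by elementary calculus (comparing the $p$-th power expansions, or differentiating in the single remaining variable and checking the sign using $\rho^2(p-1)=1$). This two-point inequality is the genuine analytic core of the argument, and I expect it to be the main obstacle: the constant $\rho=(p-1)^{-1/2}$ is precisely the threshold at which the two sides agree to second order, so the verification must be done carefully, and it is here that the value of the constant in \e{d2} is actually produced.

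Third, I would tensorise by induction on $n$. Hypercontractivity is stable under tensor products: if $T_\rho$ is a contraction $L^2(\{-1,1\})\to L^p(\{-1,1\})$, then its $n$-fold tensor power $T_\rho^{\otimes n}$, which is exactly the noise operator above on $\{-1,1\}^n$, is a contraction $L^2\to L^p$. In the inductive step I apply the one-dimensional contraction in a single coordinate (with the others frozen), then interchange the resulting outer $L^p$ and inner $L^2$ norms by Minkowski's integral inequality---legitimate precisely because $p\ge 2$---and close the induction using the hypothesis in the remaining coordinates. Combined with the reduction of the first paragraph, this yields \e{d2} for every $d\ge 1$.

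Finally, I would note how the comparison machinery of the present note upgrades the right-hand side to general systems. Applying \trm{T1} (or \cor{C1}) to the convex function $G(\t)=\big|\sum_{\#(A)=d}b_A\prod_{k\in A}t_k\big|^p$, which is of the quasi-polynomial type discussed above, gives $\textbf E\big[|\sum_A b_A\phi_A|^p\big]\le(1+\mu_d(\phi))\,\textbf E\big[|\sum_A b_A w_A|^p\big]$ for an arbitrary system $\phi$ satisfying \e{a28}. Hence \e{d2} transfers, at the cost of the factor $(1+\mu_d(\phi))$, to chaos sums built from $\phi$, which is the form in which the inequality is relevant to this paper.
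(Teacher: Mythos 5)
The paper does not prove this statement: it is quoted as a known theorem, attributed to Bonami \cite{Bon} and Kiener \cite{Kie}, and used later as a black box (to derive \e{d10} and \e{x21}). So there is no internal proof to compare against. Your proposal is the standard hypercontractivity argument --- reduce the degree-$d$ chaos bound to the $(2\to p)$ Bonami--Beckner estimate $\|T_\rho f\|_p\le\|f\|_2$ with $\rho=(p-1)^{-1/2}$ via $T_\rho S=\rho^d S$, prove the two-point inequality, and tensorise by induction using Minkowski's integral inequality (valid since $p\ge 2$) --- and it is correct in outline; this is essentially the proof in \cite{Don}, which the paper itself cites. The one place where you are sketchy is exactly where you say you are: the two-point inequality. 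For the record it does close: normalising $f=1+ux$ (the case $a=0$ being trivial), for $|u|\le 1$ one compares $\tfrac12\bigl((1+u)^p+(1-u)^p\bigr)$ with $(1+(p-1)u^2)^{p/2}$ termwise via $\binom{p}{2k}\le\binom{p/2}{k}(p-1)^k$, and the case $|u|>1$ reduces to $|u|<1$ by replacing $u$ with $1/u$ and using $(p-2)(u^2-1)\ge 0$. Your closing paragraph, transferring \e{d2} to general systems at the cost of the factor $1+\mu_d(\phi)$ via \trm{T1}, is not part of the Bonami--Kiener statement but correctly reproduces how the paper actually deploys it (this is the content of the unnamed theorem giving \e{d10}).
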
 
It was also proved in \cite{Bon} that the constant growth in \e{d2} as $p\to \infty$ is optimal in the sense that  for any $p>2$ there exists a sum \e{d1} such that 
$	\|S\|_p\ge C_dp^{d/2}\|S\|_2$ with a constant $C_d>0$, depending only on $d$. Using a standard argument one can deduce from inequalities \e{d2} the bound
\begin{equation}\label{d3}
	\left\|S\right\|_p\le C_{p,q,d}\left\|S\right\|_q,
\end{equation} 
where $1\le q<p<\infty$. It is of interest to find the optimal constants in \e{d3}. The case of the classical Rademacher functions (when $d=1$) is well investigated. The analogue of inequality \e{d2} for the simple Rademacher sums is Khintchine's classical inequality and there were found the values of all sharp constants in \e{d3} if either $p$ or $q$ is equal $2$ (see \cite{Ste, Sza, Haa, You}). To the best of our knowledge the sharp values of these constants if $d\ge 2$ are not known for any combination of the parameters $p> q$. Some particular estimates of constants \e{d3} one can find in papers \cite{Jan, Don, Lar, Ivan}. The following inequalities are immediately obtained from \e{a12}.
\begin{corollary}\label{C2}
	Let $\phi_n$, $n=1,2,\ldots$, be a system of random variables with $\|\phi_n\|_\infty\le 1$, $2\le d\le \infty$, and $\Phi:\ZR_+\to \ZR_+$ be a convex function. If $A_n$ is a sequence of different nonempty finite subsets of  $\ZN$ with $\#(A_n)\le d$, then for any finitely supported sequence of vectors $b_n$ from a Banach space $X$ there hold the bounds
	\begin{align}
		&\E\left[\Phi\left(\left\|\sum_kb_{k}\phi_{A_k}\right\|_X\right)\right]\le(1+\mu_d(\phi)) \E\left[\Phi\left(\left\|\sum_kb_{k}w_{A_k}\right\|_X\right)\right],\label{x14}\\
		&\E\left[\Phi\left(\max_n\left\|\sum_kb_{k}\phi_{A_k}\right\|_X\right)\right]\le(1+\mu_d(\phi)) \E\left[\Phi\left(\max_n\left\|\sum_kb_kw_{A_k}\right\|_X\right)\right].\label{x17}
	\end{align}
\end{corollary}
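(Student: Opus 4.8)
The plan is to obtain both bounds as direct specializations of \cor{C1}, choosing the convex function $G$ exactly as in the preceding remark. Since $b_k$ is finitely supported, only finitely many subsets $A_k$, say $k=1,\ldots,m$, carry a nonzero coefficient; let $N$ be the largest integer occurring in $\bigcup_{k=1}^m A_k$, so that only the variables $t_1,\ldots,t_N$ are involved. The key observation is that for any subset $A\subset\{1,\ldots,N\}$ the monomial $\prod_{j\in A}t_j$ evaluates at the point $(\phi_1,\ldots,\phi_N)$ to $\phi_A$ and at the Rademacher point $(r_1,\ldots,r_N)$ to $w_A$, these being exactly the definitions recorded in \e{x13}.

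For \e{x14} I would introduce the single $X$-valued quasi-polynomial $P(\t)=\sum_{k=1}^m b_k\prod_{j\in A_k}t_j$ and set $G(\t)=\Phi(\|P(\t)\|_X)$. By the remark this $G$ is convex on $\ZR^N$ (the one quasi-polynomial case $m=1$ there) and it is nonnegative since $\Phi\ge 0$, so the hypotheses of \cor{C1} are met under the standing assumption $\|\phi_k\|_\infty\le 1$. Applying \e{a33} and substituting $P(\phi_1,\ldots,\phi_N)=\sum_k b_k\phi_{A_k}$ on the left and $P(r_1,\ldots,r_N)=\sum_k b_k w_{A_k}$ on the right turns the inequality into \e{x14}.

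For the maximal bound \e{x17} I would instead use the finite family of partial-sum quasi-polynomials $P_\nu(\t)=\sum_{k=1}^\nu b_k\prod_{j\in A_k}t_j$, $\nu=1,\ldots,m$, and take $G(\t)=\Phi\big(\max_{1\le\nu\le m}\|P_\nu(\t)\|_X\big)$. The remark again guarantees that this $G$ is convex on $\ZR^N$ (now the $m$ quasi-polynomial case), and the same application of \cor{C1}, followed by the two evaluation identities applied to each $P_\nu$, yields \e{x17} once $\max_\nu$ is read as the maximum over partial sums on each side.

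I expect the only point needing care to be the convexity of these $G$ in each variable separately, which is precisely the content of the remark: each $P_\nu$ is affine in every $t_i$, so $t_i\mapsto\|P_\nu(\t)\|_X$ is convex, a pointwise maximum of convex functions is convex, and composition with $\Phi$ keeps it convex. A second, routine, point is that $\mu_d$ computed for the truncated system $\{\phi_1,\ldots,\phi_N\}$ is dominated by $\mu_d(\phi)$, since it is a partial sum of the same series of expectations; this lets one replace the truncated error by $\mu_d(\phi)$ in the constant and recover the stated factor $1+\mu_d(\phi)$.
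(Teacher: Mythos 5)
Your proof is correct and follows exactly the route the paper intends: the paper gives no explicit argument for \cor{C2} beyond the statement that the inequalities are ``immediately obtained from \e{a12}'', and the preceding remark on quasi-polynomials supplies precisely the convex functions $G(\t)=\Phi\left(\|P(\t)\|_X\right)$ and $G(\t)=\Phi\left(\max_{1\le\nu\le m}\|P_\nu(\t)\|_X\right)$ that you construct before invoking \cor{C1}. The details you add --- truncating to finitely many variables, the monotonicity of the truncated multiplicative error, and the evaluation identities $P(\phi_1,\ldots,\phi_N)=\sum_k b_k\phi_{A_k}$ and $P(r_1,\ldots,r_N)=\sum_k b_k w_{A_k}$ --- are exactly the routine steps the paper leaves implicit.
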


Letting $\Phi(t)=t^p$ in \e{x14} and $X$ to be the Banach space of real line $\ZR$, we will get inequality \e{d2} for more general sums
\begin{equation}\label{x16}
		S(x)=\sum_{A\subset \ZN,\, \#(A)=d}b_A\phi_A(x),
\end{equation}
where $\phi_A$ are the product random variables \e{x13} generated by a multiplicative systems. 
\begin{theorem}
	$\{\phi_n:\, n=1,2,\ldots\}$ with $\|\phi_n\|_\infty\le 1$, then for any integers $d\ge 2$ and any sum \e{x16} it holds the bound
	\begin{equation}\label{d10}
		\left\|S\right\|_p\le (p-1)^{d/2}\left\|S\right\|_2,\quad p>2.
	\end{equation}
\end{theorem}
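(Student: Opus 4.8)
The plan is to deduce \e{d10} by majorizing the multiplicative chaos sum $S$ by the corresponding Rademacher chaos and then invoking the Bonami--Kiener inequality \e{d2}. First I would note that, since $\{\phi_n\}$ is a multiplicative system, \e{a13} holds for every finite nonempty $A$, so $\mu_d(\phi)=0$; in particular the error factor $1+\mu_d(\phi)$ in \cor{C2} equals $1$. I would then check that the hypotheses of \cor{C2} are met here: $\|\phi_n\|_\infty\le 1$, the sets (enumerated as $A_1,A_2,\ldots$) are distinct with $\#(A_k)=d$, the coefficients $b_{A_k}$ are real so that $X=\ZR$, and $\Phi(t)=t^p$ is convex on $\ZR_+$ for $p>2$.

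With these choices inequality \e{x14} becomes
\begin{equation*}
 \E\left[\left|\sum_k b_{A_k}\phi_{A_k}\right|^p\right]\le \E\left[\left|\sum_k b_{A_k}w_{A_k}\right|^p\right],
\end{equation*}
that is $\|S\|_p\le \|S_W\|_p$, where $S_W=\sum_k b_{A_k}w_{A_k}$. Since every $A_k$ has cardinality exactly $d$, the function $S_W$ is a Rademacher chaos of order $d$, so \e{d2} applies and yields $\|S_W\|_p\le (p-1)^{d/2}\|S_W\|_2$. Finally, by the orthonormality of the Walsh system $\{w_A\}$ I would evaluate $\|S_W\|_2=\big(\sum_k b_{A_k}^2\big)^{1/2}$, and chaining the three bounds gives
\begin{equation*}
 \|S\|_p\le (p-1)^{d/2}\Big(\sum_k b_{A_k}^2\Big)^{1/2},
\end{equation*}
which is \e{d10} once the right-hand $L^2$-quantity is identified with $\|S\|_2$.

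The two substantive ingredients, \e{x14} and \e{d2}, are both available, so the remaining work is largely bookkeeping; the one step I would treat with care is the $L^2$ normalization on the right. The comparison \e{x14} transfers the $L^p$-size from the multiplicative to the Rademacher side in exactly the direction needed for the majorant, but at the exponent $2$ it only furnishes $\|S\|_2\le\big(\sum_k b_{A_k}^2\big)^{1/2}$; consequently the $L^2$-norm that legitimately appears on the right of the final estimate is that of the Rademacher chaos $S_W$, which orthonormality turns into the coefficient norm $\big(\sum_k b_{A_k}^2\big)^{1/2}$. I expect this identification of the $L^2$ term, rather than the chaining of the two main inequalities, to be the only point requiring attention.
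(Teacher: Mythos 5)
Your route is the paper's own: the entire published argument for this theorem is the single sentence preceding it (``Letting $\Phi(t)=t^p$ in \e{x14} and $X$ to be $\ZR$ \dots''), i.e.\ exactly your chain $\|S\|_p\le\|S_W\|_p\le(p-1)^{d/2}\|S_W\|_2=(p-1)^{d/2}\bigl(\sum_k b_{A_k}^2\bigr)^{1/2}$, where $S_W$ is the Walsh sum with the same coefficients. Up to the last step your argument and the paper's coincide.

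The point you flag at the end, however, is not bookkeeping: it is a genuine gap, and it cannot be closed. Since \e{x14} at exponent $2$ only gives $\|S\|_2\le\|S_W\|_2$, replacing $\|S_W\|_2$ by $\|S\|_2$ on the right would strengthen the conclusion, and the strengthened form \e{d10} is false as literally stated. Take $\phi_1,\phi_2$ independent and mean zero with $P(\phi_i=1)=P(\phi_i=-1)=q/2$ and $P(\phi_i=0)=1-q$; this is a multiplicative system with $\|\phi_i\|_\infty\le 1$. For $S=\phi_1\phi_2$ (so $d=2$, a single coefficient $b_{\{1,2\}}=1$) one has $\|S\|_p=q^{2/p}$ and $\|S\|_2=q$, hence $\|S\|_p/\|S\|_2=q^{2/p-1}\to\infty$ as $q\to 0$, which exceeds $(p-1)^{d/2}=p-1$ for small $q$. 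What your argument (and the paper's) actually establishes is
\begin{equation*}
\left\|S\right\|_p\le (p-1)^{d/2}\Bigl(\sum_k b_{A_k}^2\Bigr)^{1/2}=(p-1)^{d/2}\left\|S_W\right\|_2 ,
\end{equation*}
and this is the form needed downstream anyway, since Stechkin's criterion invoked for \cor{C3} is stated with the coefficient norm $\bigl(\sum_k a_k^2\bigr)^{1/2}$ on the right. So you should state and prove the theorem in that form rather than promise an identification of $\bigl(\sum_k b_{A_k}^2\bigr)^{1/2}$ with $\|S\|_2$ that does not hold. Separately, make explicit the hypothesis, omitted from the garbled theorem statement, that $\{\phi_n\}$ is multiplicative (or at least $d$-multiplicative); otherwise $\mu_d(\phi)$ need not vanish and the factor $1+\mu_d(\phi)$ from \cor{C2} survives.
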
 
Recall that a system of functions $\{\phi_n\}$ is said to be a convergence system if any series 
\begin{equation}\label{x15}
	\sum_na_n\phi_n(x)
\end{equation}
with coefficients satisfying $\sum_na_n^2<\infty$ converges a.e., if a.e. convergence holds after any rearrangement of the terms of \e{x15}, then we say $\phi_n(x)$ is an unconditional convergence system.  As it was proved by Stechkin \cite{Ste}, if a system of functions $\phi_n\in L^2$ satisfies a Khintchine type inequality
\begin{equation} 
	\left\|\sum_{k=1}^na_k\phi_k\right\|_p\le c\left(\sum_{k=1}^na_k^2\right)^{1/2}
\end{equation}
for some $p>2$ and $c>0$, then $\phi_n(x)$ is an unconditional convergence system. Using this result and the bound \e{d10} we conclude the following.
\begin{corollary}\label{C3}
	Let $\phi=\{\phi_n$, $n=1,2,\ldots\}$,  be an arbitrary system of random variables with $\|\phi_n\|_\infty\le M$ and $\mu_d(\phi)<\infty$, $d\ge 1$. Then for any sequence $A_n\subset \ZN$, $n=1,2,\ldots$, of different nonempty sets with $\#(A_n)\le d$ the random variables $\{\phi_{A_n}\}$ form an unconditional convergence system.
\end{corollary}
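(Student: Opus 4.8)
The plan is to verify the single hypothesis of Stechkin's criterion for the system $\{\phi_{A_n}\}$: to produce an exponent $p>2$ and a constant $c$, both independent of the number of terms, such that
\[
\left\|\sum_{k}a_k\phi_{A_k}\right\|_p\le c\left(\sum_k a_k^2\right)^{1/2}
\]
for every finitely supported sequence $(a_k)$. Once this Khintchine-type bound is established, the corollary follows at once from Stechkin's theorem quoted before the statement, since each $\phi_{A_n}$ is bounded and hence lies in $L^2$.

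First I would normalize. Put $M'=\max\{M,1\}$ and $\tilde\phi_k=\phi_k/M'$, so that $\|\tilde\phi_k\|_\infty\le1$. A direct inspection of the definition shows that $\mu_d$ is scale invariant, $\mu_d(\tilde\phi)=\mu_d(\phi)$, because the factor $\prod_{j\in A}C_j^{-1}$ and the expectation $\E[\prod_{j\in A}\phi_j]$ rescale by reciprocal powers of $M'$. Writing $\phi_{A_k}=(M')^{\#(A_k)}\tilde\phi_{A_k}$, the sum $\sum_k a_k\phi_{A_k}$ becomes $\sum_k\tilde a_k\tilde\phi_{A_k}$ with $\tilde a_k=a_k(M')^{\#(A_k)}$, and, since $\#(A_k)\le d$ and $M'\ge1$, we have $\sum_k\tilde a_k^2\le (M')^{2d}\sum_k a_k^2$.

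Next I would invoke \cor{C2}. Taking $\Phi(t)=t^p$ and $X=\ZR$ in \e{x14}, applied to the normalized system $\tilde\phi$ with coefficients $\tilde a_k$, gives
\[
\left\|\sum_k\tilde a_k\tilde\phi_{A_k}\right\|_p\le(1+\mu_d(\phi))^{1/p}\left\|\sum_k\tilde a_k w_{A_k}\right\|_p .
\]
The right-hand side is a genuine Rademacher chaos sum of degree at most $d$. The Bonami--Kiener inequality \e{d2}, in its hypercontractive form valid for Walsh polynomials of degree $\le d$, yields $\|\sum_k\tilde a_k w_{A_k}\|_p\le(p-1)^{d/2}\|\sum_k\tilde a_k w_{A_k}\|_2$, and since the $A_k$ are distinct the $w_{A_k}$ are orthonormal, so $\|\sum_k\tilde a_k w_{A_k}\|_2=(\sum_k\tilde a_k^2)^{1/2}$. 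Assembling these estimates with the normalization bound on $\sum_k\tilde a_k^2$ produces, for any fixed $p>2$,
\[
\left\|\sum_k a_k\phi_{A_k}\right\|_p\le(1+\mu_d(\phi))^{1/p}(p-1)^{d/2}(M')^{d}\left(\sum_k a_k^2\right)^{1/2},
\]
which is the required inequality with a constant independent of the number of summands, so Stechkin's theorem applies.

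The genuinely routine parts are the scale invariance of $\mu_d$ and the coefficient bookkeeping. The one point that deserves a word rather than a mere citation is the passage from the exactly-$d$-homogeneous bound \e{d2} to its version for Walsh polynomials of degree $\le d$, which is needed because the sets $A_k$ may have different cardinalities; this is the standard consequence of hypercontractivity, obtained by applying the noise operator $T_\rho$ with $\rho=(p-1)^{-1/2}$ and estimating each homogeneous layer separately, the layer of degree $j\le d$ contributing a factor $\rho^{-j}\le\rho^{-d}=(p-1)^{d/2}$. I expect this to be the only step requiring care; the remainder is a direct combination of \cor{C2}, Bonami--Kiener, and Stechkin's criterion.
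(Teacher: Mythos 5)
Your proof is correct and follows essentially the same route the paper intends: combine the Khintchine-type bound obtained from \eqref{x14} and Bonami--Kiener with Stechkin's criterion. The paper gives no further detail, so your normalization by $M$, the scale invariance of $\mu_d$, and especially the passage from the exactly-$d$-homogeneous inequality \eqref{d2} to Walsh polynomials of degree at most $d$ are welcome additions that the paper leaves implicit.
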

\begin{remark}
	An interesting case in \cor{C3} is when $\{\phi_k\}$ is a bounded martingale difference system, that is 
$
		\E\left(\phi_n|\phi_1,\ldots,\phi_{n-1}\right)=0,\quad n=2,3,\ldots.
$
\end{remark}
\begin{remark}
Inequalities \e{x14} and \e{x17} show certain extremal properties of the Rade\-ma\-cher chaos or the Walsh functions.  Some extremal properties of simple Rademacher functions one can find in \cite{Ast}, chap. 7 and 9. Those are mostly concerned to independent random variables and no one consider Rademacher chaos or maximal function of partial sums as we have in \e{x14} or \e{x17}.
\end{remark}

\section{Proof of \trm{T1}}\label{S3}
A real-valued  function $f$ defined on $[a,b)$ is said to be a step function if it can be written as a finite linear combination of indicator functions of intervals $[\alpha,\beta)\subset [a,b)$. The following lemmas were proved in \cite{Kar4}.
\begin{lemma}[\cite{Kar4}, Lemma 2.2]\label{L0}
	Let $\phi=\{\phi_k:\, k=1,2,\ldots,n\}$ be a system of measurable functions on $[0,1)$ satisfying \eqref{a28} and ${\mathfrak M}$ be an arbitrary family of subsets of ${\mathbb Z}_n$. Then the functions $\phi_k$ can be extended up to the interval $[0,1+\mu)$ such that
	\begin{align}
		&A_k\le \phi_k(x)\le B_k\text { if } x\in [0,1+\mu),\label{a26}\\
		&\int_0^{1+\mu}\prod_{j\in A}\phi_{j} =0\text { for all } A\in {\mathfrak M},\label{a27}
	\end{align}
	where 
		\begin{equation}
		\mu_\ZM(\phi)=\sum_{A\in \ZM}\quad\prod_{j\in A}(C_j)^{-1}\left|{\textbf E}\left[\prod_{j\in A}\phi_{j} \right]\right|,
	\end{equation}
	is the multiplicative error corresponding to the family $\ZM$.  Moreover, each $\phi_k$ is a step function on $[1,1+\mu)$.
\end{lemma}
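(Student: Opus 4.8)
The plan is to leave the functions untouched on $[0,1)$ and to build the correction on $[1,1+\mu)$ one target set at a time. Write $c_A:=\E\left[\prod_{j\in A}\phi_j\right]=\int_0^1\prod_{j\in A}\phi_j$ for $A\in\ZM$, and recall $\mu=\mu_\ZM(\phi)=\sum_{A\in\ZM}\prod_{j\in A}(C_j)^{-1}|c_A|$. I would allocate to each $A\in\ZM$ a subinterval $I_A\subset[1,1+\mu)$ of length exactly $\ell_A:=\prod_{j\in A}(C_j)^{-1}|c_A|$ (discarding the sets with $c_A=0$, which need no correction); since $\sum_{A\in\ZM}\ell_A=\mu$, the $I_A$ partition $[1,1+\mu)$. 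The whole proof then reduces to constructing, on each single $I_A$, an admissible piece of the extension whose only nonzero effect is to contribute exactly $-c_A$ to $\int_1^{1+\mu}\prod_{j\in A}\phi_j$, while contributing $0$ to $\int_1^{1+\mu}\prod_{j\in A'}\phi_j$ for every other $A'\in\ZM$. If this separation holds, then \e{a27} follows by adding the old value $c_A$ to the correction $-c_A$, and the total length is automatically $\sum_A\ell_A=\mu$.

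On a fixed $I_A$ I would first put $\phi_j\equiv 0$ for every $j\notin A$; this is admissible by $A_j<0<B_j$ and it already kills $\int_{I_A}\prod_{j\in A'}\phi_j$ whenever $A'\not\subseteq A$, because some factor vanishes. For $j\in A$ I let $\phi_j$ take only the values $\pm C_j$, which respects \e{a26} since $C_j=\min\{-A_j,B_j\}$ gives $-C_j\ge A_j$ and $C_j\le B_j$; thus a value of $\phi_j$ on $I_A$ is encoded by a sign pattern $s=(s_j)_{j\in A}\in\{-1,1\}^A$. The decisive choice is which patterns to use: I would subdivide $I_A$ into $2^{\#(A)-1}$ equal pieces indexed by the coset $V=\{s\in\{-1,1\}^A:\ \prod_{j\in A}s_j=-\sign(c_A)\}$ of the even-weight subgroup $H=\{s:\ \prod_{j\in A}s_j=1\}$, and on the piece assigned to $s$ set $\phi_j=s_jC_j$ for $j\in A$.

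The reason this succeeds is a Walsh character-orthogonality computation, which is the step carrying the real content. Over the full set the product is constant, $\prod_{j\in A}\phi_j\equiv(\prod_{j\in A}C_j)\prod_{j\in A}s_j=-\sign(c_A)\prod_{j\in A}C_j$ on $I_A$, so $\int_{I_A}\prod_{j\in A}\phi_j=-\sign(c_A)\,\ell_A\prod_{j\in A}C_j=-c_A$ by the choice of $\ell_A$. For a proper nonempty $A'\subsetneq A$ the product $\prod_{j\in A'}\phi_j$ is $(\prod_{j\in A'}C_j)$ times the character $s\mapsto\prod_{j\in A'}s_j$, which is nontrivial on $H$ precisely because $A'\neq\emptyset,A$; hence it sums to zero over the coset $V$ and $\int_{I_A}\prod_{j\in A'}\phi_j=0$. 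This is exactly the separation property demanded above, so summing over all $I_A$ yields \e{a27}. Finally, each $\phi_k$ is a finite combination of indicators of the subintervals just described, hence a step function on $[1,1+\mu)$. The main obstacle, and the only place requiring care, is this coset character sum together with the bookkeeping that $I_A$ disturbs no integral except that of $A$ itself; once that is in hand, admissibility and the length accounting are immediate.
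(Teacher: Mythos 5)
Your construction is correct: the interval--allocation with lengths $\ell_A=\prod_{j\in A}(C_j)^{-1}|c_A|$, the vanishing of $\phi_j$ off $A$, and the coset character sum over $V=\{s:\prod_{j\in A}s_j=-\sign(c_A)\}$ together give exactly \eqref{a26}, \eqref{a27} and the step-function property, with all cases ($A'=A$, $\emptyset\neq A'\subsetneq A$, $A'\not\subseteq A$, and $c_{A'}=0$) accounted for. The paper itself only cites \cite{Kar4} for this lemma, and your argument is essentially the same standard construction used there (signs of $\phi_j$, $j\in A$, distributed like Rademacher variables conditioned to have prescribed product), so there is nothing to add.
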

\begin{lemma}[\cite{Kar4}, Lemma 2.3]\label{L1}
	Let measurable functions $\phi_k$, $k=1,2,\ldots,n$, defined on $[0,1)$, satisfy \eqref{a28}. Then for any $\delta>0$ one can find step functions $f_k$, $k=1,2,\ldots, n$, on  $[0,1)$ with $A_k\le f_k\le B_k$ such that
	\begin{equation}\label{a7}
		|\{|\phi_k-f_k|>\delta\}|<\delta,\quad k=1,2,\ldots,n,
	\end{equation}
	and 
	\begin{equation}\label{a31}
		\int_0^1f_{n_1}f_{n_2}\ldots f_{n_\nu}=0\text { as } \{n_1,n_2,\ldots,n_\nu\}\in {\mathfrak M},
	\end{equation}
	where ${\mathfrak M}$ is the family of finite nonempty sets $A\subset \ZN$, satisfying \eqref{a13}.
\end{lemma}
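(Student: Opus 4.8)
The plan is to prove \lem{L1} by a two-stage construction: first produce step functions $g_k$ that approximate the $\phi_k$ well in measure, then correct them slightly so that the vanishing-integral relations \e{a31} are restored exactly, while keeping the approximation and the pointwise bounds $A_k\le f_k\le B_k$ intact. For the first stage I would use the standard density of step functions: since each $\phi_k$ is measurable and bounded on $[0,1)$, Lusin's theorem (or direct approximation of a bounded measurable function by simple functions adapted to a fine dyadic partition) yields a step function $g_k$ with $|\{|\phi_k-g_k|>\delta'\}|<\delta'$ for any prescribed $\delta'>0$. By truncating $g_k$ to the interval $[A_k,B_k]$ (which only improves the approximation, since $\phi_k$ already lies there) I may assume $A_k\le g_k\le B_k$ pointwise. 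Choosing a common refinement of the partitions, I can take all $g_k$ to be constant on the same finite collection of dyadic subintervals of $[0,1)$.

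The difficulty is that the approximating step functions $g_k$ will generally not satisfy the exact orthogonality relations \e{a31}; the products $\int_0^1 g_{n_1}\cdots g_{n_\nu}$ will be small but nonzero. The key step is therefore the \emph{correction}. Here I would exploit the fact that $\ZM$ is a \emph{finite} family (for fixed $n$, there are only finitely many nonempty subsets $A\subset\ZZ_n$), so only finitely many integral constraints must be met. The continuity of the multilinear map $(g_1,\ldots,g_n)\mapsto\left(\int_0^1\prod_{j\in A}g_j\right)_{A\in\ZM}$, together with the strict inequalities $A_k<0<B_k$ which leave room to perturb each $g_k$ on a small set without leaving $[A_k,B_k]$, lets me adjust the values of the $g_k$ on a set of arbitrarily small measure to drive all the constraint integrals to exactly zero. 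Concretely, since $\E[\prod_{j\in A}\phi_j]=0$ for every $A\in\ZM$ by hypothesis \e{a13}, and since $g_k\to\phi_k$ in $L^2$ as $\delta'\to0$ (boundedness plus convergence in measure gives convergence in every $L^p$), each product integral $\int_0^1\prod_{j\in A}g_j$ can be made as small as we like; an explicit local adjustment on a short interval, using the slack between $g_k$ and its bounds $A_k,B_k$, then zeroes them out.

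The main obstacle will be carrying out the correction \emph{simultaneously} for all $A\in\ZM$ without reintroducing error into the already-satisfied constraints and without violating the bounds. I would handle this by processing the constraints in a fixed order and reserving, for each constraint, a fresh disjoint subinterval $I_A\subset[0,1)$ of small length on which to perform the adjustment; modifying the $g_k$ only on $I_A$ changes the integral for $A$ by a controllable amount while perturbing every other product integral by at most $O(|I_A|)$, so that after finitely many steps all integrals vanish and the total measure of the modified set stays below $\delta$. An alternative, cleaner route is to invoke \lem{L0}: apply the extension lemma to the step-function approximants $g_k$ with the finite family $\ZM$ to obtain exactly orthogonal extensions, and then rescale the interval $[0,1+\mu_\ZM(g))$ back to $[0,1)$; since $\mu_\ZM(g)\to0$ as the approximation improves (the extension length is controlled by the size of the residual product integrals, which tend to $0$), the rescaling distorts the approximation negligibly, yielding step functions $f_k$ on $[0,1)$ satisfying both \e{a7} and \e{a31}. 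Either route reduces the lemma to the finiteness of $\ZM$ and the strict two-sided bounds, which is where all the work concentrates.
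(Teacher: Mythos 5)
The paper does not actually prove \lem{L1}; it is imported verbatim from \cite{Kar4} (Lemma 2.3), so there is no in-text proof to compare against. Your second route --- approximate by step functions, apply the extension \lem{L0} to the approximants with the finite family $\ZM$, then contract $[0,1+\mu)$ back to $[0,1)$ via $x\mapsto (1+\mu)x$ --- is the natural argument and fits the machinery the paper uses elsewhere (the same extend-and-rescale device appears in the proof of Theorem 1.1); the product integrals are homogeneous under this dilation, so \e{a31} survives exactly, and $\mu=\mu_{\ZM}(g)\to 0$ as the approximation improves. Two points need repair. First, your primary route of sequential in-place corrections on disjoint small intervals $I_A$ does not close as stated: fixing the constraint for one set re-perturbs the constraints already fixed, and a budget count shows in-place correction cannot work at all --- replacing a set of measure $\ell$ changes each of the $\#(\ZM)\ge 1$ constraint integrals by up to a constant times $\ell$, so the L0-type correction capacity of the replaced set is already consumed by the damage caused by replacing it. This is exactly why \lem{L0} \emph{extends} the domain rather than modifying in place, so you should rely entirely on the second route. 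Second, the assertion that the contraction ``distorts the approximation negligibly'' is the one genuinely delicate step, and the justification you hint at (via the step functions) fails: $g_k((1+\mu)x)$ differs from $g_k(x)$ on a set of measure up to $m\mu$, where $m$ is the number of jumps of $g_k$, and $m$ necessarily blows up as you refine the approximation in order to force $\mu\to 0$, so the product $m\mu$ need not be small. The correct justification compares $f_k(x)=\tilde g_k((1+\mu)x)$ with $\phi_k((1+\mu)x)$, which costs only the original approximation error pushed forward by the dilation, and then invokes continuity of dilations in $L^1$ (equivalently, in measure) for the fixed bounded measurable functions $\phi_k$ to get $|\{x:|\phi_k((1+\mu)x)-\phi_k(x)|>\delta/2\}|\to 0$ as $\mu\to 0$; the leftover set $[1/(1+\mu),1)$ has measure at most $\mu$. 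With that substitution your argument is complete.
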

\begin{lemma}[\cite{Kar4}, Lemma 2.4]\label{L3}
	If $g_k$, $k=1,2,\ldots,n$, is a $d$-multiplicative system of nonzero random variables such that each $g_k$ takes two values, then $g_k$ are $d$-independent.
\end{lemma}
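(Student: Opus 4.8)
The plan is to exploit the fact that a mean-zero two-valued random variable can be inverted, in the sense that the indicator of each of its two values is an affine function of the variable itself. First I would note that for $d\ge 1$ the singleton instance $A=\{k\}$ of the $d$-multiplicative hypothesis gives $\E[g_k]=0$. Since $g_k$ takes two distinct values and is not identically zero, these values cannot be of the same sign (a two-valued variable with both values positive, both negative, or one of them zero, cannot have mean zero without degenerating), so they are $\alpha_k<0<\beta_k$, with marginal probabilities $p_k=\ZP(g_k=\beta_k)$ and $q_k=1-p_k$. The decisive elementary identity is that for each value $v\in\{\alpha_k,\beta_k\}$ one can write
\[
\mathbf 1_{\{g_k=v\}}=\ZP(g_k=v)+\lambda_k^{(v)}g_k
\]
for a suitable constant $\lambda_k^{(v)}$; the affine form holds because $g_k$ is two-valued (so $\beta_k-\alpha_k\neq 0$), and matching expectations, using $\E[g_k]=0$, forces the constant term to equal the marginal probability.

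To establish $d$-independence I would fix any index set $S=\{k_1,\ldots,k_m\}$ with $m\le d$ together with prescribed values $v_i\in\{\alpha_{k_i},\beta_{k_i}\}$, and compute the joint probability as
\[
\ZP\bigl(g_{k_1}=v_1,\ldots,g_{k_m}=v_m\bigr)=\E\Bigl[\prod_{i=1}^m\mathbf 1_{\{g_{k_i}=v_i\}}\Bigr].
\]
Substituting the affine identity for each indicator and expanding the product yields a sum over all subsets $B\subseteq S$ of terms proportional to $\E\bigl[\prod_{j\in B}g_j\bigr]$, in which the empty set contributes exactly the product of marginals $\prod_{i=1}^m\ZP(g_{k_i}=v_i)$.

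The key observation is then a cardinality count: since $m\le d$, every nonempty $B\subseteq S$ satisfies $\#(B)\le m\le d$, so the $d$-multiplicative hypothesis forces $\E\bigl[\prod_{j\in B}g_j\bigr]=0$ for all such $B$. Hence every term except the empty-set term vanishes, and the joint probability factors as the product of the marginals. As this holds for every choice of values $v_i$, the variables $g_{k_1},\ldots,g_{k_m}$ are independent; and since $S$ was an arbitrary subset of cardinality at most $d$, the system $\{g_k\}$ is $d$-independent.

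I expect the only delicate point to be the affine inversion and the verification that neither value is zero, so that $\beta_k-\alpha_k\neq 0$ and the indicators are genuinely affine in $g_k$; the remainder is a clean binomial-type expansion. The restriction to at most $d$ indices is exactly what guarantees that all the nonempty product-expectations arising in the expansion lie within the reach of $d$-multiplicativity and therefore vanish, which is precisely why the conclusion is $d$-independence rather than full independence.
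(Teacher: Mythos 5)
Your proof is correct. The paper does not reprove this lemma (it is quoted from \cite{Kar4}, Lemma 2.4), but your argument --- checking via $\E[g_k]=0$ that the two values straddle zero, writing each indicator $\mathbf 1_{\{g_k=v\}}$ as an affine function of $g_k$ whose constant term is the marginal probability, expanding the product over an index set of size $m\le d$, and annihilating every nonempty product-expectation by $d$-multiplicativity --- is the standard proof of this fact and matches in substance the one given in the cited source.
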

The following lemma is a version of Lemma 2.5 of \cite{Kar4} with the same proof. For the sake of completeness its complete proof will be provided. 
\begin{lemma}\label{L2}
	Let $f_1,f_2,\ldots,f_n$ be real-valued step functions on $[0,1)$ satisfying $A_k\le f_k(x)\le B_k$ and let $G:{\mathbb R}^n\to {\mathbb R}^+$ be a convex function. Then there are $\{A_k,B_k\}$-valued step functions $\xi_1,\xi_2,\ldots,\xi_n$ on $[0,1)$ such that 
	\begin{equation}\label{a8}
		\int_0^1\xi_{n_1} \xi_{n_2} \ldots \xi_{n_\nu}=\int_0^1f_{n_1}f_{n_2}\ldots  f_{n_\nu},
	\end{equation}
	for any choice of $\{n_1,n_2,\ldots,n_\nu\}\subset {\mathbb Z}_n=\{1,2,\ldots,n\}$, and it holds the inequality
	\begin{equation}\label{a24}
		\int_0^1G\left(f_1,\ldots,f_n\right)\le \int_0^1G\left(\xi_1,\ldots,\xi_n\right).
	\end{equation}
\end{lemma}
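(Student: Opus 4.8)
The plan is to reduce everything to a single common refinement on which all the $f_k$ are simultaneously constant, and then to replace each constant value by a two-valued (i.e.\ $\{A_k,B_k\}$-valued) function with the correct mean, arranged so that the new functions are \emph{independent} on each piece of the refinement. Since $f_1,\dots,f_n$ are step functions, I would first fix a finite partition of $[0,1)$ into intervals $I_1,\dots,I_m$ such that every $f_k$ is constant on every $I_i$, say $f_k\equiv c_{k,i}$ on $I_i$ with $A_k\le c_{k,i}\le B_k$. On each $I_i$ I put $\lambda_{k,i}=(c_{k,i}-A_k)/(B_k-A_k)\in[0,1]$, which is exactly the relative length that makes a $\{A_k,B_k\}$-valued function taking the value $B_k$ on a fraction $\lambda_{k,i}$ of $I_i$ have average $c_{k,i}$ there.

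To construct the $\xi_k$, I would, inside each $I_i$ separately, carry out the standard iterated subdivision that produces independent two-valued functions: split $I_i$ into two subintervals of relative lengths $\lambda_{1,i}$ and $1-\lambda_{1,i}$ and let $\xi_1$ take the values $B_1,A_1$ there; then split each of those two subintervals in the proportion $\lambda_{2,i}:(1-\lambda_{2,i})$ to define $\xi_2$; and continue through $\xi_n$. After $n$ steps each $\xi_k$ is a $\{A_k,B_k\}$-valued step function on $I_i$, and by construction the restrictions $\xi_1,\dots,\xi_n$ to $I_i$ are mutually independent with respect to normalized Lebesgue measure on $I_i$, each $\xi_k$ having average $c_{k,i}$. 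Performing this on every $I_i$ yields global $\{A_k,B_k\}$-valued step functions $\xi_1,\dots,\xi_n$ on $[0,1)$ (finitely many pieces in all).

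For the moment identity \e{a8}, I fix distinct indices $n_1,\dots,n_\nu\subset\ZZ_n$. On each $I_i$ the product $f_{n_1}\cdots f_{n_\nu}$ is the constant $\prod_{j}c_{n_j,i}$, while independence of $\xi_{n_1},\dots,\xi_{n_\nu}$ on $I_i$ gives $|I_i|^{-1}\int_{I_i}\xi_{n_1}\cdots\xi_{n_\nu}=\prod_{j}c_{n_j,i}$ as well. Thus the two integrals coincide on each $I_i$, and summing over $i$ produces \e{a8}.

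For \e{a24} I would argue atom by atom. On $I_i$ the left-hand integrand is the constant $G(c_{1,i},\dots,c_{n,i})$, so it suffices to prove $G(c_{1,i},\dots,c_{n,i})\le |I_i|^{-1}\int_{I_i}G(\xi_1,\dots,\xi_n)$. Here I combine separate convexity with independence: integrating out $\xi_1$ first and applying the one-variable Jensen inequality in the first coordinate (using that the average of $\xi_1$ on $I_i$ is $c_{1,i}$) replaces $\xi_1$ by $c_{1,i}$ while only decreasing the integral; repeating for $\xi_2,\dots,\xi_n$ in turn yields $G(c_{1,i},\dots,c_{n,i})$ as a lower bound. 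Multiplying by $|I_i|$ and summing over $i$ gives \e{a24}. The one point demanding genuine care — and the only real obstacle — is the simultaneous realization of the two requirements on each atom: the $\xi_k$ must be exactly two-valued with the prescribed means \emph{and} mutually independent, since it is independence that both yields the multiplicative factorization needed for \e{a8} and legitimizes the coordinatewise Jensen argument for \e{a24}. The iterated-subdivision construction is precisely what secures both at once.
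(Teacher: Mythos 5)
Your proposal is correct and follows essentially the same route as the paper: a common refinement into constancy intervals, replacement of each constant value by a mean-preserving $\{A_k,B_k\}$-valued function via iterated subdivision, and a coordinatewise Jensen argument for \e{a24}. The only difference is presentational — you build all the $\xi_k$ at once and invoke their independence on each atom to verify \e{a8} and \e{a24} directly, whereas the paper replaces $f_1,\dots,f_n$ one at a time and checks the moment identity and the Jensen step at each stage; the resulting functions are the same.
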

\begin{proof}
	Let $\Delta_j$, $j=1,2,\ldots,m$ be the constancy intervals of functions $f_k$. Let $\Delta=[\alpha,\beta)$ be one of those intervals. Observe that the point
	\begin{equation}
		c=\frac{B_1\alpha-A_1\beta}{B_1-A_1}+\frac{1}{B_1-A_1}\cdot \int_\alpha^\beta f_1(t)dt
	\end{equation}
	is in the closure of the interval $\Delta$. Then we define $\xi_1$ on $\Delta$ as
	\begin{equation}
		\xi_1(x)=B_1\cdot {\textbf 1}_{ [\alpha,c)}(x)+A_1\cdot{\textbf 1}_{[c,\beta)}(x),\quad x\in \Delta.
	\end{equation}
	Applying this to each $\Delta_j$, we will get a function $\xi_1$ defined on entire $[a,b)$ and one can check 
	\begin{equation}\label{a3}
		\int_{\Delta_j}\xi_1(t)dt=\int_{\Delta_j}f_1(t)dt,\quad j=1,2,\ldots,m.
	\end{equation}
	Since each $f_k$ is constant on the intervals $\Delta_j$, $j=1,2,\ldots,m$, from \eqref{a3} we conclude that
	\begin{equation}\label{a4}
		\int_0^1\xi_{1} f_{n_2} \ldots f_{n_\nu}=\int_0^1f_{1}f_{n_2}\ldots f_{n_\nu},
	\end{equation}
	for any collection of integers $1<n_2<\ldots<n_\nu\le n$. We may also claim that
	\begin{equation}\label{a5}
		\int_0^1G\left(f_1,f_2,\ldots,f_n\right)\le \int_0^1G\left(\xi_1,f_2,\ldots,f_n\right).
	\end{equation}
	Indeed, let us fix an interval $\Delta_j$ and suppose that $f_k(t)=c_k$ on $\Delta_j$. Applying \eqref{a3} and the Jessen inequality, we get
	\begin{align}
		\int_{{\Delta_j}}G\left(f_1(t),f_2(t),\ldots,f_n(t)\right)dt&=G\left(c_1,c_2,\ldots c_n\right)|{\Delta_j}|\\
		&=G\left(\frac{1}{|{\Delta_j}|}\int_{{\Delta_j}}\xi_1\,,c_2,\ldots,c_n\right) |{\Delta_j}|\\
		&\le \int_{{\Delta_j}}G\left(\xi_1(t),f_2(t),\ldots,f_n(t)\right)dt,
	\end{align}
	then the summation over $j$ implies \eqref{a5}. Applying the same procedure to the new system 
	$\xi_1,f_2,\ldots, f_n$ we can similarly replace $f_2$ by $\xi_2$. Continuing this procedure we will replace all functions $f_k$ to $\xi_k$ ensuring the conditions of lemma.
\end{proof}

\begin{proof}[Proof of Theorem \ref{T1}]
	At first we suppose that $\{\phi_k\}$ is a $d$-multiplicative system. Without loss of generality we can suppose that $\phi_k$ are defined on $[0,1)$. Applying Lemma \ref{L1}, we find an $d$-multiplicative system of step functions $f_k$ on $[0,1)$ satisfying 
	\begin{equation}\label{a32}
	|\{x\in [0,1):\,|f_k(x)-\phi_k(x)|>\delta\}|<\delta.
	\end{equation}
	Then we apply Lemma \ref{L2} and get $A_k,B_k$-valued step functions $\xi_k$ defined on $[0,1)$ and satisfying \eqref{a8} and \eqref{a24} ($a=0,b=1$). Since $\{f_k\}$ is $d$-multiplicative, in view of \eqref{a8} so we will have for $\{\xi_k\}$ and then by Lemma \ref{L3} $\{\xi_k\}$ forms $A_k,B_k$-valued $d$-independent random variables. Observe that
	\begin{equation}
	\int_0^1G\left(\phi_1,\ldots,\phi_n\right)\le \varepsilon(\delta) +\int_0^1G\left(f_1,\ldots,f_n\right),
	\end{equation}
where by \eqref{a32} we have
\begin{equation}\label{x9}
	\varepsilon =\varepsilon(\delta)\to 0\text{ as } \delta\to 0. 
\end{equation}
Here we also use the continuity of the convex function $G$. Thus, from \eqref{a24} we obtain
	\begin{align}
	\int_0^1G\left(\phi_1,\ldots,\phi_n\right)&\le \varepsilon +\int_0^1G\left(f_1,\ldots,f_n\right)\le \varepsilon +\int_0^1G\left(\xi_1,\ldots,\xi_n\right).
	\end{align}
	Therefore, taking into account \eqref{x9}, one can easily get \eqref{a12}. Indeed, the system $\{\xi_k\}$ in fact depends on $\delta$. So let $\{\xi_k^{(m)}\}$ be the system corresponding to $\delta =1/m$. We can suppose that there is a partition $0=x_0^{(m)}\le x_1^{(m)}\le \ldots\le x_{2^n}^{(m)}=1$ such that each function $\xi_k^{(m)}$ is equal to a constant (either $A_k$ or $B_k$) on each interval $[x_{j-1}^{(m)},x_j^{(m)})$, $1\le j\le 2^n$, and this constant is independent of $m$. Then we find a sequence $m_k$ such that each sequence $x_j^{(m_k)}$, $k=1,2,\ldots $, is convergent. One can check that this  generates a limit  sequence of $d$-multiplicative $\{A_k,B_k\}$-valued random variables $\xi_k$, $k=1,2,\ldots,n$, satisfying \eqref{a12}.
	
	Now suppose that $\{\phi_n\}$ is an arbitrary system. Applying \lem{L0} with $\ZM=\{A\subset \ZZ_n:\, \#(A)\le d\}$, we may extend  $\{\phi_n\}$ up to $[0,1+\mu]$, satisfying the conditions of the lemma.  Consider the $d$-multiplicative system $\psi_n(x)=g_n((1+\mu)x)$ on $[0,1) $. We have
	\begin{align}
\int_0^1G\left(\phi_1,\ldots,\phi_n\right)&\le \int_0^{1+\mu}G\left(\phi_1,\ldots,\phi_n\right)\\
&=(1+\mu)\int_0^1G\left(\psi_1,\ldots,\psi_n\right)\le (1+\mu)\int_0^1G\left(\xi_1,\ldots,\xi_n\right),
	\end{align}
where $\xi_n$ is a $d$-independent system generated by \lem{L3} applied on $\psi_n$. This completes the proof of theorem.
	
\end{proof}

\section{Trigonometric subsystems}

The Walsh functions have a canonical numeration, $w_n$, $n=0,1,2,\ldots$. That is $w_0\equiv 1$ and for $n\ge 1$ with a dyadic decomposition
\begin{equation}\label{x18}
	n=2^{k_1}+2^{k_2}+\cdots +2^{k_d},\quad 0\le k_1<k_2<\cdots<k_d.
\end{equation}
we have
\begin{equation*}
	w_n(x)=r_{k_1+1}(x)r_{k_2+1}(x)\ldots r_{k_d+1}(x).
\end{equation*}
We call $d$ a power of the number $n$ and write $\rho(n)=d$. 
Let $L_\Phi(0,1)$ denote the Orlicz space of functions on $(0,1)$, corresponding to a Young function $\Phi:\ZR_+\to\ZR_+$ that is a convex function satisfying
\begin{equation*}
	\lim_{t\rightarrow 0+}\frac{\Phi(t)}{t}=\lim_{t\rightarrow \infty }\frac{t}{\Phi(t)
	}=0.
\end{equation*}
The Luxemburg norm of a function $f\in L_\Phi(0,1)$ is defined by
\begin{equation*}
	\|f\|_\Phi=\inf \left\{ \lambda :\,\lambda
		>0,\,\int_0^1 \Phi\left( \frac{|f|}{\lambda }\right) \le
		1\right\} <\infty .
\end{equation*}

\begin{corollary}
If	$\Phi:\ZR_+\to \ZR_+$ is a Young function, then for any finitely supported sequence of coefficients $b_n$, $n=1,2,\ldots,$ we have the bound
	\begin{align}
&\left\|\sum_{n:\,\rho(n)\le d} b_n\cos 2\pi n(x+x_n)\right\|_\Phi\le 2^{d-1}\left\|\sum_{n:\,\rho(n)\le d} b_nw_n(x)\right\|_\Phi,\label{x19}\\
&\left\|\max_{m\ge 1}\left|\sum_{n\le m:\,\rho(n)\le d} b_n\cos 2\pi n(x+x_n)\right|\right\|_\Phi\le 2^{d-1}\left\|\max_{m\ge 1}\left|\sum_{n\le m:\,\rho(n)\le d} b_nw_n(x)\right|\right\|_\Phi\label{x20}
	\end{align}
\end{corollary}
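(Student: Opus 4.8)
The plan is to realize the trigonometric functions $\cos 2\pi n(x+x_n)$ as a chaos of a level-indexed multiplicative system and then invoke \cor{C2}. Since the Luxemburg norm is positively homogeneous and $\Phi$ is convex, it suffices to prove, for every convex $\Phi:{\mathbb R}_+\to{\mathbb R}_+$ and every finitely supported $\{b_n\}$, the integral inequality
\begin{equation*}
\int_0^1\Phi\left(2^{-(d-1)}\left|\sum_{\rho(n)\le d}b_n\cos 2\pi n(x+x_n)\right|\right)dx\le \int_0^1\Phi\left(\left|\sum_{\rho(n)\le d}b_nw_n(x)\right|\right)dx,
\end{equation*}
together with its maximal analogue (with $\max_m|\cdot|$ of the partial sums). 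Indeed, applying such a bound with $\Phi$ replaced by the Young function $\Phi(\cdot/\lambda)$ and $\lambda=\|\sum_nb_nw_n\|_\Phi$ (respectively the maximal norm) gives \e{x19} and \e{x20} at once.

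First I would introduce the system $\Psi$ with generators $\cos 2\pi 2^{k-1}x$ and $\sin 2\pi 2^{k-1}x$, $k\ge 1$, on $[0,1)$, all bounded by $1$. Encoding the phase as a unimodular coefficient $\lambda_n=e^{2\pi i nx_n}$ and using $e^{2\pi i2^{k-1}x}=\cos 2\pi2^{k-1}x+i\sin 2\pi2^{k-1}x$, angle addition yields
\begin{equation*}
\cos 2\pi n(x+x_n)={\rm Re}\left[\lambda_n\prod_{k\in A_n}\big(\cos 2\pi2^{k-1}x+i\sin 2\pi2^{k-1}x\big)\right],
\end{equation*}
where $A_n$ is the set of dyadic digits of $n$. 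Expanding the product writes each $\cos 2\pi n(x+x_n)$ as a real combination, with coefficients of modulus $\le 1$, of $\#A_n=\rho(n)$-fold products of generators taken at the distinct levels of $A_n$; these are chaos elements of $\Psi$, and distinct $n$ yield disjoint families of them, so $\sum_nb_n\cos 2\pi n(x+x_n)$ is a genuine chaos sum of $\Psi$. Only products over distinct frequency-levels occur, and for such products lacunarity of $2^{k-1}$ forces the frequency $\sum_j\pm 2^{k_j-1}$ to be nonzero (the top power dominates), so their expectations vanish; hence the multiplicative error over the family of distinct-level index sets is $0$, and \lem{L0} together with the extremal argument behind \trm{T1}, applied with respect to this family, delivers the comparison of \cor{C2} with constant $1$.

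Applying \e{x14} with $X={\mathbb R}$ then bounds the left integral by $\int_0^1\Phi(|\sum_Bc_BW_B|)$, where $W_B$ are the Walsh functions of the doubled Rademacher system $(r_k^c,r_k^s)$ obtained by replacing $(\cos,\sin)$ by $(r_k^c,r_k^s)$. Writing $r_k^c+ir_k^s=r_k^c(1+i\tau_k)$ with $\tau_k=r_k^cr_k^s$ I would factor
\begin{equation*}
\sum_Bc_BW_B={\rm Re}\left[\sum_nb_n\lambda_n\,w_n^c\prod_{k\in A_n}(1+i\tau_k)\right],\qquad w_n^c=\prod_{k\in A_n}r_k^c.
\end{equation*}
Conditioning on the signs $\tau$ (independent of the $r_k^c$) turns the right side into a Walsh chaos $\sum_n\alpha_nw_n^c$ in the Rademachers $r_k^c$ with $|\alpha_n|\le 2^{\rho(n)/2}|b_n|\le 2^{d-1}|b_n|$; bounding its $\Phi$-average by $\int_0^1\Phi(2^{d-1}|\sum_nb_nw_n|)$, after identifying $r_k^c$ with $r_k$, and averaging over $\tau$ gives \e{x19}. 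The maximal inequality \e{x20} follows verbatim from \e{x17}, since $\Phi(\max_m|\cdot|)$ is again convex.

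The main obstacle is this final collapse. The Rademacher realization of the unimodular $e^{2\pi inx}$ has modulus $2^{\rho(n)/2}$ rather than $1$, so one must control the $\Phi$-average of $\sum_n\alpha_nw_n^c$, whose coefficients $\alpha_n$ fluctuate with $\tau$, by that of the single-coefficient sum $\sum_nb_nw_n$; because Walsh terms sharing a Rademacher factor flip sign simultaneously, the ordinary contraction principle is not available termwise, so establishing this monotonicity in the coefficients while isolating exactly the power $2^{d-1}$ is the technical heart. A secondary point to verify is the family-restricted comparison used above: the full doubled system fails to be $d$-multiplicative for $d\ge 3$ (for instance the triple $\cos 2\pi 2^{k-1}x,\ \sin 2\pi 2^{k-1}x,\ \sin 2\pi 2^{k}x$ has nonzero mean), so one must confine attention to the distinct-level products that actually appear. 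Finally, the degenerate case $d=1$ needs no sine generators and is handled directly by \cor{C2} applied to the lacunary multiplicative system $\{\cos 2\pi2^{k-1}(x+x_{2^{k}})\}$, giving the constant $2^{0}=1$.
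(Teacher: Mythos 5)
Your route diverges from the paper's at the decisive point, and the step you yourself flag as ``the technical heart'' is a genuine gap, not a technicality. After passing to the doubled system $(r_k^c,r_k^s)$ and conditioning on $\tau$, you are left needing
\begin{equation*}
\int_0^1\Phi\Bigl(\Bigl|\sum_n\alpha_n(\tau)\,w_n^c\Bigr|\Bigr)\le\int_0^1\Phi\Bigl(2^{d-1}\Bigl|\sum_nb_nw_n\Bigr|\Bigr)
\end{equation*}
from nothing more than the coefficient bound $|\alpha_n(\tau)|\le 2^{\rho(n)/2}|b_n|$. For a Walsh \emph{chaos} and a general convex $\Phi$ no such termwise comparison holds: the coefficient of $w_3=r_1r_2$ cannot be shrunk or sign-flipped independently of those of $w_1,w_2$ by any symmetry of the Rademacher system, so the distribution of the sum genuinely changes and $\int\Phi$ can increase. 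Nothing in \trm{T1} or \cor{C2} supplies this monotonicity, and you offer no substitute. A second unproved step is the ``family-restricted'' use of \cor{C2}: as you note, the doubled system is not $d$-multiplicative for $d\ge3$, and while \lem{L0} does admit an arbitrary family $\ZM$, the identification of the limiting extremal system with independent Rademachers goes through \lem{L3} and needs full $d$-multiplicativity; for a restricted family the comparison system need not be the Walsh system at all, so even the first inequality in your chain is not licensed by the results of the paper as stated.

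The paper avoids both issues by staying real throughout: using the multiple-angle formula, each $\cos2\pi n(x+x_n)$ with $n=2^{k_1}+\cdots+2^{k_d}$ is written as a sum of $2^{d-1}$ products $t_1\bigl(2^{k_1}(x+\alpha_1)\bigr)\cdots t_d\bigl(2^{k_d}(x+\alpha_d)\bigr)$ with $t_j\in\{\sin2\pi(\cdot),\cos2\pi(\cdot)\}$. For each fixed assignment of sines/cosines and phases there is exactly \emph{one} generator per dyadic level, and that one-per-level lacunary system is genuinely multiplicative and uniformly bounded by $1$, so \e{x14} and \e{x17} apply with constant $1$ to each of the $2^{d-1}$ chaos sums; the factor $2^{d-1}$ is then just the triangle inequality for $\|\cdot\|_\Phi$ over the terms of the decomposition, not a coefficient bound inside a single chaos. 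If you want to salvage your approach you would have to prove the doubled-to-single collapse by some averaging or decoupling device, which is precisely the work the paper's decomposition is designed to sidestep.
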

\begin{proof}
	Using the trigonometric sums formulae and decomposition \e{x18}, the function $\cos 2\pi n(x+x_n)$ may be written as a sum of $2^{d-1}$ products of the form
	\begin{equation}
		t_1\big(2^{k_1}(x+\alpha_1)\big)t_2\big(2^{k_2}(x+\alpha_2)\big)\cdots t_s\big(2^{k_d}(x+\alpha_d)\big),
	\end{equation}
	where each $t_j(x)$ is either $\sin 2\pi x$ or $\cos 2\pi x$.  On the other hand for any choice of trigonometric functions $t_j(x)$ and numbers $\alpha_j$ the system $t_j(2^j(x+\alpha_j))$ is multiplicative and obeys the conditions of \cor{C2}. Thus, inequalities \e{x14} and \e{x17} imply \e{x19} and \e{x20} respectively.
\end{proof}
Then using the Bonami-Kiener inequality \e{d2}, from \e{x19} we immediately obtain the following 
\begin{corollary}
	For any finitely supported sequence $b_n$, $n=1,2,\ldots,$ we have the bound
	\begin{equation}\label{x21}
	\left\|\sum_{n:\,\rho(n)=d} b_nt_n(x)\right\|_p\le 2^{d-1}(p-1)^{d/2}\left\|\sum_{n:\,\rho(n)=d} b_nt_n(x)\right\|_2,\quad p>2.
\end{equation}
\end{corollary}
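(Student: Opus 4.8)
The plan is to chain two inequalities already available to us: the transfer bound \e{x19}, which estimates the $L_\Phi$-norm of a trigonometric chaos sum by that of the corresponding Walsh sum, followed by the Bonami--Kiener inequality \e{d2}, which controls the $L^p$-norm of a Walsh chaos sum by its $L^2$-norm. Since both the trigonometric functions $t_n(x)=\cos 2\pi n(x+x_n)$ and the Walsh functions $w_n$ form orthogonal systems indexed by the same $n$, their respective $L^2$-norms are each proportional to $\left(\sum_n b_n^2\right)^{1/2}$, and the passage from the Walsh $L^2$-norm back to the trigonometric $L^2$-norm is a direct consequence of Parseval's identity.

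I would carry this out in three steps. First, set $\Phi(t)=t^p$ and $X=\ZR$ in \e{x19} and restrict the summation to indices with $\rho(n)=d$ (which is legitimate, since one may take $b_n=0$ whenever $\rho(n)<d$); this gives
\begin{equation*}
	\left\|\sum_{n:\,\rho(n)=d} b_n t_n\right\|_p\le 2^{d-1}\left\|\sum_{n:\,\rho(n)=d} b_n w_n\right\|_p.
\end{equation*}
Second, the dyadic decomposition \e{x18} shows that the condition $\rho(n)=d$ corresponds exactly to $\#(A)=d$, so the Walsh sum on the right is a Rademacher chaos of order $d$ and \e{d2} applies to yield
\begin{equation*}
	\left\|\sum_{n:\,\rho(n)=d} b_n w_n\right\|_p\le (p-1)^{d/2}\left\|\sum_{n:\,\rho(n)=d} b_n w_n\right\|_2.
\end{equation*}
Third, I would invoke Parseval on both systems: orthonormality of the Walsh functions gives $\left\|\sum_n b_n w_n\right\|_2=\left(\sum_n b_n^2\right)^{1/2}$, while orthogonality of the trigonometric functions gives $\left\|\sum_n b_n t_n\right\|_2=c\left(\sum_n b_n^2\right)^{1/2}$ for a common normalization constant $c=\|t_n\|_2$ independent of $n$. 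Combining the three displays then yields \e{x21}.

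The step demanding the most care is the last one, the transfer between the two $L^2$-norms. One must first verify that the functions $\cos 2\pi m(x+x_m)$ and $\cos 2\pi n(x+x_n)$ are mutually orthogonal on $(0,1)$ despite the arbitrary phase shifts; this follows from the product-to-sum formula, since for $m\ne n$ both frequencies $m-n$ and $m+n$ are nonzero integers, so the corresponding integrals over $(0,1)$ vanish. One must then keep track of the normalization $c$ so that it combines with the factor $2^{d-1}$ arising from the product decomposition behind \e{x19} to give the stated constant; this bookkeeping is the only place where the specific structure of the trigonometric system, rather than the abstract machinery of \trm{T1} and \cor{C2}, enters the argument.
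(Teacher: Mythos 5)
Your proof is correct in method and is essentially the paper's own: the paper derives \e{x21} in one line by chaining \e{x19} with $\Phi(t)=t^p$ and the Bonami--Kiener inequality \e{d2}, and your orthogonality check for the shifted cosines is the right justification of the final Parseval step. One caveat: since $\|\cos 2\pi n(\cdot+x_n)\|_2=1/\sqrt{2}$ while $\|w_n\|_2=1$, the passage from $\bigl\|\sum b_nw_n\bigr\|_2=\bigl(\sum b_n^2\bigr)^{1/2}$ back to $\bigl\|\sum b_nt_n\bigr\|_2$ costs an extra factor $\sqrt{2}$, so the chain actually yields the constant $\sqrt{2}\cdot 2^{d-1}(p-1)^{d/2}$ rather than the stated one --- a normalization issue the paper itself glosses over, not a defect of your approach.
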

Note that inequality \e{x21} with another constant $c(d)$ instead of $2^{d-1}$ was proved in \cite{LoRo} (see Theorem 5.13). The method of proof in \cite{LoRo} is the Riesz products technique, quite a different from ours. An examination of its proof may provide a value $c(d)\ge  16^d$ for the mentioned constant. We have a question concerning the possible optimal constant in \e{x19}.
\begin{prob}
	Letting $\Phi(t)=t^p$, $1<p<\infty$, prove inequality \e{x19} with the right side constant independent of $d$.
\end{prob}
One can check that in the case
\begin{equation}
	\int_0^\infty \frac{dt}{\Phi(t)}=\infty,
\end{equation} 
the statement of this problem doesn't hold, since for the trigonometric and Walsh Dirichlet polynomials
\begin{equation}\label{x22}
	D_{2^n}^T(x)=\sum_{k=1}^{2^n} \cos 2\pi kx, \quad D_{2^n}^W(x)=\sum_{k=1}^{2^n} w_k(x)
\end{equation}
we have 
\begin{equation*}
\frac{\|D_{2^n}^T\|_\Phi}{\|D_{2^n}^W\|_\Phi}\to \infty \text { as } n\to \infty.
\end{equation*}
Polynomials \e{x20} tell nothing us about the optimal constant in \e{x20}. So in this case we suggest the following.
\begin{prob}
	Prove inequality \e{x20} with the right side constant independent of $d$ for any Young function $\Phi(t)$ .
\end{prob}

\bibliographystyle{plain}

% \bib, bibdiv, biblist are defined by the amsrefs package.
\begin{bibdiv}
	\begin{biblist}
		\bib{Alex}{book}{
			author={Alexits, G.},
			title={Convergence problems of orthogonal series},
			series={Translated from the German by I. F\"{o}lder. International Series of
				Monographs in Pure and Applied Mathematics, Vol. 20},
			publisher={Pergamon Press, New York-Oxford-Paris},
			date={1961},
			pages={ix+350},
			review={\MR{0218827}},
		}
		\bib{Alex1}{article}{
			author={Alexits, G.},
			title={On the convergence of function series},
			journal={Acta Sci. Math. (Szeged)},
			volume={34},
			date={1973},
			pages={1--9},
			issn={0001-6969},
			review={\MR{342947}},
		}
	
		\bib{Ast}{book}{
			author={Astashkin, Sergey V.},
			title={The Rademacher system in function spaces},
			publisher={Birkh\"{a}user/Springer, Cham},
			date={[2020] \copyright 2020},
			pages={xx+559},
			isbn={978-3-030-47889-6},
			isbn={978-3-030-47890-2},
			review={\MR{4230108}},
			doi={10.1007/978-3-030-47890-2},
		}
		\bib{Azu}{article}{
			author={Azuma, Kazuoki},
			title={Weighted sums of certain dependent random variables},
			journal={Tohoku Math. J. (2)},
			volume={19},
			date={1967},
			pages={357--367},
			issn={0040-8735},
			review={\MR{221571}},
			doi={10.2748/tmj/1178243286},
		}
		\bib{Ber}{article}{
			author={Berkes, I.},
			title={On Strassen's version of the loglog law for multiplicative
				systems},
			journal={Studia Sci. Math. Hungar.},
			volume={8},
			date={1973},
			pages={425--431},
			issn={0081-6906},
			review={\MR{356191}},
		}
		\bib{Bon}{article}{
			author={Bonami, Aline},
			title={\'{E}tude des coefficients de Fourier des fonctions de $L^{p}(G)$},
			language={French, with English summary},
			journal={Ann. Inst. Fourier (Grenoble)},
			volume={20},
			date={1970},
			number={fasc. 2},
			pages={335--402 (1971)},
			issn={0373-0956},
			review={\MR{283496}},
		}
		\bib{Don}{book}{
			author={O'Donnell, Ryan},
			title={Analysis of Boolean functions},
			publisher={Cambridge University Press, New York},
			date={2014},
			pages={xx+423},
			isbn={978-1-107-03832-5},
			review={\MR{3443800}},
			doi={10.1017/CBO9781139814782},
		}
		\bib{Fuk1}{article}{
			author={Fukuyama, Katusi},
			title={A mean central limit theorem for weakly multiplicative systems and
				its application to lacunary trigonometric series},
			journal={Probab. Theory Related Fields},
			volume={89},
			date={1991},
			number={2},
			pages={159--179},
			issn={0178-8051},
			review={\MR{1110536}},
			doi={10.1007/BF01366904},
		}
		\bib{Fuk2}{article}{
			author={Fukuyama, Katusi},
			title={On some limit theorems for weakly multiplicative systems},
			conference={
				title={Limit theorems in probability and statistics},
				address={P\'{e}cs},
				date={1989},
			},
			book={
				series={Colloq. Math. Soc. J\'{a}nos Bolyai},
				volume={57},
				publisher={North-Holland, Amsterdam},
			},
			date={1990},
			pages={197--214},
			review={\MR{1116788}},
		}
	
		\bib{Gap2}{article}{
			author={Gapo\v{s}kin, V. F.},
			title={The convergence of series in weakly multiplicative systems of
				functions},
			language={Russian},
			journal={Mat. Sb. (N.S.)},
			volume={89(131)},
			date={1972},
			pages={355--365, 533},
			review={\MR{0334315}},
		}

		\bib{Haa}{article}{
			author={Haagerup, Uffe},
			title={The best constants in the Khintchine inequality},
			journal={Studia Math.},
			volume={70},
			date={1981},
			number={3},
			pages={231--283 (1982)},
			issn={0039-3223},
			review={\MR{654838}},
			doi={10.4064/sm-70-3-231-283},
		}
		\bib{Hoe}{article}{
			author={Hoeffding, Wassily},
			title={Probability inequalities for sums of bounded random variables},
			journal={J. Amer. Statist. Assoc.},
			volume={58},
			date={1963},
			pages={13--30},
			issn={0162-1459},
			review={\MR{144363}},
		}
		\bib{Ivan}{article}{
			author={Ivanisvili, Paata},
			author={Tkocz, Tomasz},
			title={Comparison of moments of Rademacher chaoses},
			journal={Ark. Mat.},
			volume={57},
			date={2019},
			number={1},
			pages={121--128},
			issn={0004-2080},
			review={\MR{3951277}},
			doi={10.4310/ARKIV.2019.v57.n1.a7},
		}
		\bib{Jan}{book}{
			author={Janson, Svante},
			title={Gaussian Hilbert spaces},
			series={Cambridge Tracts in Mathematics},
			volume={129},
			publisher={Cambridge University Press, Cambridge},
			date={1997},
			pages={x+340},
			isbn={0-521-56128-0},
			review={\MR{1474726}},
			doi={10.1017/CBO9780511526169},
		}

		\bib{Kar4}{article}{
			author={Karagulyan, Grigori A.},
			title={Probability inequalities for multiplicative sequences of random
				variables},
			journal={Proc. Amer. Math. Soc.},
			volume={149},
			date={2021},
			number={9},
			pages={3725--3737},
			issn={0002-9939},
			review={\MR{4291573}},
			doi={10.1090/proc/15558},
		}
	
		\bib{Khi}{article}{
			author={Khintchine, A.},
			title={\"{U}ber dyadische Br\"{u}che},
			language={German},
			journal={Math. Z.},
			volume={18},
			date={1923},
			number={1},
			pages={109--116},
			issn={0025-5874},
			review={\MR{1544623}},
			doi={10.1007/BF01192399},
		}
		\bib{Kie}{book}{
			author={Kiener, K.},
			title={\"Uber Produkte von quadratisch integrierbaren Funktionen
				endlicher Vielfalt},
			series={Dissertation},
			publisher={Universit\"at Innsbruck},
			date={1969},
		}
		\bib{Kom}{article}{
			author={Koml\'{o}s, J.},
			title={On the series $\sum c_{k}\varphi _{k}$},
			journal={Studia Sci. Math. Hungar.},
			volume={7},
			date={1972},
			pages={451--458},
			issn={0081-6906},
			review={\MR{374795}},
		}
		\bib{KoRe}{article}{
			author={Koml\'{o}s, J.},
			author={R\'{e}v\'{e}sz, P.},
			title={Remark to a paper of Gaposhkin},
			journal={Acta Sci. Math. (Szeged)},
			volume={33},
			date={1972},
			pages={237--241},
			issn={0001-6969},
			review={\MR{0320617}},
		}
		
		\bib{Lar}{article}{
			author={Larsson-Cohn, Lars},
			title={$L^p$-norms of Hermite polynomials and an extremal problem on
				Wiener chaos},
			journal={Ark. Mat.},
			volume={40},
			date={2002},
			number={1},
			pages={133--144},
			issn={0004-2080},
			review={\MR{1948890}},
			doi={10.1007/BF02384506},
		}
		\bib{LoRo}{book}{
			author={L\'{o}pez, Jorge M.},
			author={Ross, Kenneth A.},
			title={Sidon sets},
			series={Lecture Notes in Pure and Applied Mathematics, Vol. 13},
			publisher={Marcel Dekker, Inc., New York},
			date={1975},
			pages={v+193},
			review={\MR{0440298}},
		}
		\bib{Mor}{article}{
			author={M\'{o}ricz, F.},
			title={The law of the iterated logarithm and related results for weakly
				multiplicative systems},
			language={English, with Russian summary},
			journal={Anal. Math.},
			volume={2},
			date={1976},
			number={3},
			pages={211--229},
			issn={0133-3852},
			review={\MR{418204}},
			doi={10.1007/BF01919065},
		}
	
		\bib{Rub}{article}{
			author={Rubinshte\u{\i}n, A. I.},
			title={On a set of weakly multiplicative systems},
			language={Russian},
			journal={Mat. Zametki},
			volume={105},
			date={2019},
			number={3},
			pages={471--475},
			issn={0025-567X},
			translation={
				journal={Math. Notes},
				volume={105},
				date={2019},
				number={3-4},
				pages={473--477},
				issn={0001-4346},
			},
			review={\MR{3920424}},
			doi={10.4213/mzm11856},
		}
		\bib{Ste}{article}{
			author={Ste\v{c}kin, S. B.},
			title={On best lacunary systems of functions},
			language={Russian},
			journal={Izv. Akad. Nauk SSSR Ser. Mat.},
			volume={25},
			date={1961},
			pages={357--366},
			issn={0373-2436},
			review={\MR{0131097}},
		}
		\bib{Sza}{article}{
			author={Szarek, S. J.},
			title={On the best constants in the Khinchin inequality},
			journal={Studia Math.},
			volume={58},
			date={1976},
			number={2},
			pages={197--208},
			issn={0039-3223},
			review={\MR{430667}},
			doi={10.4064/sm-58-2-197-208},
		}

		\bib{You}{article}{
			author={Young, R. M. G.},
			title={On the best possible constants in the Khintchine inequality},
			journal={J. London Math. Soc. (2)},
			volume={14},
			date={1976},
			number={3},
			pages={496--504},
			issn={0024-6107},
			review={\MR{438089}},
			doi={10.1112/jlms/s2-14.3.496},
		}
		\bib{Zyg1}{book}{
			author={Zygmund, Antoni},
			title={Trigonometric series. 2nd ed. Vol. 1},
			publisher={Cambridge University Press, New York},
			date={1959},
			review={\MR{0107776}},
		}
	\end{biblist}
\end{bibdiv}

\end{document}